\documentclass[10pt]{amsart}

\usepackage[T1]{fontenc}
\usepackage{lmodern}
\usepackage{amsfonts}
\usepackage{amsxtra}
\usepackage{amssymb}
\usepackage{mathtools}
\usepackage[margin=0.9in]{geometry}
\usepackage{xcolor}
\definecolor{cite}{rgb}{0.30,0.60,1.00}
\definecolor{url}{rgb}{0.00,0.00,0.80}
\definecolor{link}{rgb}{0.40,0.10,0.20}
\usepackage[
  colorlinks,
  linkcolor=link,
  urlcolor=url,
  citecolor=cite,
  pagebackref,
  breaklinks
]{hyperref}
\usepackage{enumitem}
\usepackage{microtype}

\allowdisplaybreaks
\numberwithin{equation}{section}
\setlist[enumerate]{leftmargin=2.2em}

\theoremstyle{plain}
\newtheorem{proposition}{Proposition}[section]
\newtheorem{theorem}[proposition]{Theorem}
\newtheorem{lemma}[proposition]{Lemma}

\theoremstyle{definition}

\theoremstyle{remark}

\DeclareMathOperator{\Rep}{Rep}
\DeclareMathOperator{\Stab}{Stab}

\DeclareMathOperator{\Lie}{Lie}
\DeclareMathOperator{\For}{For}

\newcommand{\CC}{\mathbb C}
\newcommand{\RR}{\mathbb R}
\newcommand{\ZZ}{\mathbb Z}
\newcommand{\fg}{\mathfrak g}
\newcommand{\fm}{\mathfrak m}
\newcommand{\fq}{\mathfrak q}
\newcommand{\fr}{\mathfrak r}
\newcommand{\fp}{\mathfrak p}
\newcommand{\fu}{\mathfrak u}
\newcommand{\fh}{\mathfrak h}
\newcommand{\vG}{{}^{\vee}G}
\newcommand{\vB}{{}^{\vee}B}
\newcommand{\vT}{{}^{\vee}T}
\newcommand{\vL}{{}^{\vee}L}
\newcommand{\vM}{{}^{\vee}M}
\newcommand{\vQ}{{}^{\vee}Q}
\newcommand{\vU}{{}^{\vee}U}
\newcommand{\vR}{{}^{\vee}R}
\newcommand{\vK}{{}^{\vee}K}
\newcommand{\vP}{{}^{\vee}P}
\newcommand{\vfg}{{}^{\vee}\fg}
\newcommand{\vfm}{{}^{\vee}\fm}
\newcommand{\vfr}{{}^{\vee}\fr}
\newcommand{\vfp}{{}^{\vee}\fp}
\newcommand{\vfu}{{}^{\vee}\fu}
\newcommand{\vtheta}{{}^{\vee}\theta}
\newcommand{\cF}{\mathcal F}

\newcommand{\OO}{\mathcal O}
\newcommand{\alg}{\mathrm{alg}}
\newcommand{\mic}{\mathrm{mic}}
\newcommand{\SSupp}{\operatorname{SS}}

\title[Arthur-packet support equality]
{The Arthur-Packet Support Equality for Real Reductive Groups}

\author{Jiawei Yang}
\address{School of Mathematical Sciences, Xiamen University, Xiamen 361005, China}
\email{yangjw@stu.xmu.edu.cn}

\begin{document}

\begin{abstract}
Let $\psi$ be a real Arthur parameter and let $\psi_2$ be the unipotent
parameter in a fixed Jordan decomposition. Adams--Ionov--Mason-Brown--Vogan
proved that the microlocal packet of $\psi$ is contained in the support of
the two-step Jordan induction of the packet of $\psi_2$, and conjectured
equality. We prove the reverse inclusion. The argument first passes to a
sufficiently positive translate, where the relevant connected components
of the ABV spaces have a common flag-variety model. Perverse d\'evissage reduces the problem to a
singular-support implication, which follows from the incidence calculation
underlying the AIMV microstalk comparison. Coherent continuation for
individual packet members then returns the result to the original parameter.
\end{abstract}

\maketitle

\section{Introduction}

Let $(G^\Gamma,\mathcal W)$ be an extended group, let $(\vG^\Gamma,\mathcal S)$ be a corresponding
$E$-group with second invariant $z$, and let $\psi\colon W_{\RR}\times SL_2(\CC)\longrightarrow \vG^\Gamma$
be an Arthur parameter; then $\psi|_{W_{\RR}}$ has bounded image and
$\psi|_{\{1\}\times SL_2(\CC)}$ is algebraic. Write
$\psi_{\mathrm{alg}}=\psi|_{\{1\}\times SL_2(\CC)}$. Its Arthur
$\mathfrak{sl}_2$-triple is
\[
 e=d\psi_{\mathrm{alg}}\begin{pmatrix}0&1\\0&0\end{pmatrix},
 \qquad
 f_A=d\psi_{\mathrm{alg}}\begin{pmatrix}0&0\\1&0\end{pmatrix},
 \qquad
 h=d\psi_{\mathrm{alg}}\begin{pmatrix}1&0\\0&-1\end{pmatrix}.
\]

Fix a Jordan decomposition of $\psi$ in the sense of
\cite[Definition~4.1.1]{AIMV26}, with Levi groups and parabolics
$M_2\subset M_1\subset G$, $Q_2=M_2U_2\subset M_1$, and
$Q_1=M_1U_1\subset G$.
On the dual side,
$\vQ_2=\vM_2\vU_2\subset\vM_1$. The parameter viewed in
$\vM_2^\Gamma$ is the unipotent parameter $\psi_2$, and the two-step
Jordan operator is $R_{\fq_1}^{\fg}R_{\fq_2}^{\fm_1}$, where the
right- and left-hand operations are cohomological and real parabolic
induction, respectively, in the projective normalization of
\cite[Sections~2.5--2.6]{AIMV26}.

For a parabolic $Q=LU$, let $2\rho_{\fu}$ be the determinant character
of $L$ on $\fu$, and let $
 (2\rho_{\fu})^\vee\colon\CC^\times\longrightarrow\vL$
be the corresponding algebraic cocharacter on the dual side. Then $
 z(\rho_{\fu}):=(2\rho_{\fu})^\vee(-1)\in Z(\vL)^{\theta_Z}$
is the central element of order dividing two
\cite[(2.5.1)--(2.5.2)]{AIMV26}.
Put $z_1:=zz(\rho_{\fu_1})$ and
$z_2:=z_1z(\rho_{\fu_2})=zz(\rho_{\fu_1})z(\rho_{\fu_2})$. The extended-group
induction maps of \cite[(2.6.1)]{AIMV26} have projective types\footnote{We
follow the conventions of \cite[Sections~2.5--2.6]{AIMV26}. The target
superscript in its v1 display (4.5.1) should be $z$, consistently with
its Corollary~4.5.1(ii).}
\begin{equation}\label{eq:projective-type-ledger}
 \begin{split}
 R_{\fq_2}^{\fm_1}\colon
 K\Pi^{z_2}(M_2^\Gamma)&\longrightarrow K\Pi^{z_1}(M_1^\Gamma),\\
 R_{\fq_1}^{\fg}\colon
 K\Pi^{z_1}(M_1^\Gamma)&\longrightarrow K\Pi^z(G^\Gamma).
\end{split}
\end{equation}

For a real parabolic $Q(\RR)=L(\RR)U(\RR)$, this convention means
\begin{equation}\label{eq:real-induction-normalization}
 R_{\fq}^{\fg}(V)
 =\operatorname{Ind}_{Q(\RR)}^{G(\RR)}
   \left(V\otimes\frac{\rho_{\fu}}{|\rho_{\fu}|}\right),
\end{equation}
where $\operatorname{Ind}$ is the usual normalized real parabolic
induction, $\rho_{\fu}$ is the canonical square-root character of
$2\rho_{\fu}$ on the canonical cover, and
$|\rho_{\fu}|=|2\rho_{\fu}|^{1/2}$ is pulled back from $L(\RR)$;
see \cite[Section~2.5.1]{AIMV26}.
The fixed phase character $\rho_{\fu}/|\rho_{\fu}|$ has zero
infinitesimal character and projective type $z(\rho_{\fu})$.
Thus it changes type $zz(\rho_{\fu})$ to type $z$, which is then
preserved by $\operatorname{Ind}$. This accounts for the second
type change in \eqref{eq:projective-type-ledger} without an additional
infinitesimal-character shift.

For a virtual representation $V=\sum_\pi n_\pi\pi$ of finite length in the
Grothendieck group, put $[V]=\{\pi:n_\pi\neq0\}$, and take unions for sets
of virtual representations.
Adams--Ionov--Mason-Brown--Vogan prove the inclusion
\cite[Theorem~4.6.1(iii)]{AIMV26}
\begin{equation}\label{eq:intro-known}
  \Pi^z(G^\Gamma)^{\mic}_{\psi}
  \subseteq
  \left[
  R_{\fq_1}^{\fg}R_{\fq_2}^{\fm_1}
  \Pi^{zz(\rho_{\fu_1})z(\rho_{\fu_2})}(M_2^\Gamma)^{\mic}_{\psi_2}
  \right]
\end{equation}
and formulate equality as \cite[Conjecture~4.6.5]{AIMV26}. The reverse
inclusion is not formal because signed contributions from distinct source
members may cancel.

\begin{theorem}\label{thm:main}
Let $(G^\Gamma,\mathcal W)$ and $(\vG^\Gamma,\mathcal S)$ be as above,
with second invariant $z$. Let $\psi$ be a real Arthur parameter and fix
a Jordan decomposition in the sense of
\cite[Definition~4.1.1]{AIMV26}. Set
$z_1=zz(\rho_{\fu_1})$ and $z_2=z_1z(\rho_{\fu_2})$. Then
\begin{equation}\label{eq:main}
  \Pi^z(G^\Gamma)^{\mic}_{\psi}
  =
  \left[
  R_{\fq_1}^{\fg}R_{\fq_2}^{\fm_1}
  \Pi^{z_2}(M_2^\Gamma)^{\mic}_{\psi_2}
  \right].
\end{equation}
\end{theorem}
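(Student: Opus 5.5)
The inclusion $\subseteq$ is \cite[Theorem~4.6.1(iii)]{AIMV26}, so only the reverse inclusion needs proof. Fix an irreducible $\pi$ occurring with nonzero coefficient in $R_{\fq_1}^{\fg}R_{\fq_2}^{\fm_1}\sigma$ for some $\sigma\in\Pi^{z_2}(M_2^\Gamma)^{\mic}_{\psi_2}$; we must show $\pi$ lies in the microlocal packet of $\psi$. Cancellation between distinct $\sigma$ is the real difficulty. So the plan is to replace the virtual statement by a geometric one about individual perverse sheaves, where positivity is available.

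\emph{Step 1 (positive translate).} Choose a character $\lambda$ of the Levi data, dominant and sufficiently regular for $\fq_1,\fq_2$, and replace $\psi$ by the translated parameter $\psi+\lambda$, compatible with the Jordan decomposition. For $\lambda$ deep enough in the chamber, the connected components of the ABV spaces $X(\OO,\vG^\Gamma)$ and $X(\OO_2,\vM_2^\Gamma)$ that carry the relevant orbits should admit a common model. Concretely, the component for $\vG$ should be a fibre bundle $\vK\times_{\vK\cap\vQ}(\cdot)$ over the $\vM_2$ component, modulo the $\vU$ directions, as in the flag-variety description of the geometric induction functor. In this model the Jordan induction corresponds, via the AIMV pairing, to pullback followed by pushforward along a proper map. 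I also expect this pushforward of a simple perverse sheaf to be semisimple by the decomposition theorem, with \emph{positive} multiplicities up to the standard sign $(-1)^{\dim}$. Equivalently, the translated induced modules are standard-module-like, with Kazhdan--Lusztig expansions whose sign is uniform for each fixed $\sigma$.

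\emph{Step 2 (d\'evissage to singular support).} With $\pi$ dual to a simple perverse sheaf $P(\xi)$ and $\sigma$ dual to $P(\xi_2)$, the fact that $\pi$ occurs in the induction of $\sigma$ should translate, by the positivity of Step 1, into $P(\xi)$ being a constituent of the induced sheaf $\mathrm{Ind}\,P(\xi_2)$. Microlocal packet membership means that the conormal $T^*_{S_\psi}$ of the Arthur orbit lies in $\SSupp$ of the sheaf. So the claim becomes the following implication: if $T^*_{S_{\psi_2}}X_2\subset\SSupp P(\xi_2)$ and $P(\xi)$ is a summand of $\mathrm{Ind}\,P(\xi_2)$, then $T^*_{S_\psi}X\subset\SSupp P(\xi)$. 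This is not formal from summands, since singular supports of summands may be smaller. I plan to use the incidence computation behind the AIMV microstalk comparison. A generic covector $\nu$ over $S_\psi$ has a unique preimage under the Lagrangian correspondence, lying over a generic covector of $S_{\psi_2}$, with the correspondence transversal there. The microstalk of $\mathrm{Ind}\,P(\xi_2)$ at $\nu$ is then the microstalk of $P(\xi_2)$, shifted. Since microstalks of semisimple objects are additive, and the microlocal multiplicity is nonnegative up to a uniform shift, the nonzero microstalk of the sum must come from some summand. The remaining task is to identify that summand as $P(\xi)$. I would do this by also tracking the characteristic-cycle coefficient, using the fact that $\pi$ itself occurs in the packet-level induction. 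This identification is the step I expect to be the main obstacle. If necessary I would run it in the reverse direction: use the AIMV equality of microlocal multiplicities $\chi^{\mic}_{S_\psi}(P(\xi))=\chi^{\mic}_{S_{\psi_2}}(P(\xi_2'))$ summed over $\xi_2'$ inducing to $\xi$, where all terms have the same sign.

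\emph{Step 3 (return via coherent continuation).} Translation functors from $\psi+\lambda$ back to $\psi$ commute with both induction operators, because they are defined by the parameter on the Levi. The microlocal packet of $\psi$ is obtained from that of $\psi+\lambda$ by coherent continuation of individual members: the translation to the possibly singular wall sends each irreducible to an irreducible or to zero, and microlocal packets correspond under this, by \cite{AIMV26}. Applying this to $\pi$, and to a preimage $\tilde\pi$ occurring in the translated induced module, transfers the inclusion back to $\psi$. This requires checking that $\pi\neq0$ lifts to such a $\tilde\pi$, which follows from exactness of translation on the induced virtual module.
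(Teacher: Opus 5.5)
\textbf{There is a genuine gap, and it is in Step~3, where the real content of the theorem lies.}

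The return from the translate to $\psi$ is not a translation to a wall inside a fixed chamber. We have $\lambda_f=\lambda_g-N\kappa$ with $\kappa=4\rho_{\fu_2}$, and in general $\lambda_f$ is not in the good range for $\fq_2$; that is why one translates at all. The coherent continuation $T=T_{L_N}(\lambda_g\to\lambda_f)$ therefore generally crosses walls, so $T\tau_g$ of a single irreducible is a genuinely signed virtual combination. You assert that continuation sends each irreducible to an irreducible or zero and that microlocal packets correspond under it. That is not in \cite{AIMV26}; it is essentially the conjecture you are proving.

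What does hold is what the paper uses. For each individual $\sigma$ there is the coherent-family identity $T(\widetilde R_N\sigma^N)=\widetilde R_0\sigma$, which is the correct form of your ``translation commutes with induction''. There is also the good-range bijection $\widetilde R_N\sigma^N=\pm\tau_g$. Together these reduce the theorem to showing $[T\tau_g]\subseteq\Pi^z(G^\Gamma)^{\mic}_\psi$ for each $\tau_g$ in the translated packet. Your plan never addresses that statement.

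Conversely, your Step~2 works at the translated level, where nothing needs proof, since the good-range bijection already settles it. It also runs in the direction where the argument actually fails. From $\nu\in\SSupp(\mathrm{Ind}\,P(\xi_2))$, exactness of microstalks only shows that \emph{some} constituent is visible, not the particular $P(\xi)$ you need. Tracking characteristic cycles, or the AIMV multiplicity identity, yields the forward inclusion, not this one.

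The missing idea is to dualize the continuation rather than the induction. By \cite[Theorem~3.6.3]{AIMV26}, $T$ is transpose under the Vogan pairings to the convolution $I=(q_g)_!q_f^*$ from the $f$-component to the $g$-component. The argument then runs as follows.
\begin{enumerate}
\item If $\pi_f(\xi)$ occurs in $T\pi_g(\eta)$, then the visible simple $P_g(\eta)$ has nonzero $K_0$-coefficient in $IP_f(\xi)$.
\item Perverse d\'evissage (exactness and nonnegativity of the Arthur microstalk) gives $\nu_g\in\SSupp(IP_f(\xi))$, with no summand identification needed.
\item The incidence microsupport estimate, together with the three-parabolics fact, gives $\nu_g\in\SSupp(IP)\Rightarrow\nu_f\in\SSupp(P)$. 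The three-parabolics fact is that $r\in\overline{\vP_f\vP_g}$ with $\operatorname{Ad}(r)e\in\vfu_f$ forces $r\in\vP_f$.
\end{enumerate}
This implication goes from visibility on the target back to the source simple object. That is exactly the direction in which the obstacle you identified does not arise.
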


Here is the main geometric step. After a sufficiently positive translation,
we restrict to the genuine connected components containing the Arthur
geometric points. Lemma~\ref{lem:fiber-model} proves that this restriction
commutes with global convolution. These components have the common
correspondence of flag varieties
$X_g\xleftarrow{q_g}Z\xrightarrow{q_f}X_f$, under which
$\operatorname{Fib}_g\circ I_\psi\simeq
(q_g)_!q_f^*\circ\operatorname{Fib}_f$. The Arthur covectors are
$\nu_f=(x_f,e)$ and $\nu_g=(x_g,e)$. For every
$P\in\mathcal P_f^\psi$, the perverse heart on the source component,
we prove
\begin{equation}\label{eq:intro-propagation}
  \nu_g\in\SSupp(I_\psi P)
  \quad\Longrightarrow\quad
  \nu_f\in\SSupp(P)
  \qquad(P\in\mathcal P_f^\psi).
\end{equation}
If a visible simple perverse sheaf occurs in $[I_\psi P]_{K_0}$, perverse
d\'evissage gives $\nu_g\in\SSupp(I_\psi P)$. The incidence calculation underlying
\cite[Lemma~4.6.2]{AIMV26}, together with the three-parabolics theorem,
then yields \eqref{eq:intro-propagation}. A coherent-family identity for each
source member carries the result back from the good-range translate.

\section{Background from ABV and AIMV}

\subsection{ABV packets and Vogan duality}

The subscripts $f$ and $g$ refer to the original endpoint and a sufficiently
positive translate, respectively. Write $\psi_f=\psi$ and
$\psi_g=\psi^N$, where $\psi^N$ is constructed in the next subsection.
Let $\OO_a\subset\vfg$ be the corresponding orbit of infinitesimal character,
and let $Y$ denote the geometric $\vG$-orbit. The corresponding blocks at the two
endpoints have the same $Y$ by Lemma~\ref{lem:fiber-model}.
Put $\mathcal X_a=X(\vG^\Gamma,\OO_a,Y)$, and let
$\mathcal D_a=D^bC^{\vG^{\alg}}(\mathcal X_a)$ and
$\mathcal P_a=P^{\vG^{\alg}}(\mathcal X_a)$
be the bounded equivariant constructible derived
category on this $Y$-block and its perverse heart, respectively.
The $Y$-block need not be connected. Let
$j_a:\mathcal X_a^\psi\hookrightarrow\mathcal X_a$ be the open and
closed connected component containing the Arthur geometric point of
$\psi_a$, and write $\mathcal D_a^\psi$ and $\mathcal P_a^\psi$ for
its equivariant derived category and perverse heart. Extension by zero
identifies these with direct summands of $\mathcal D_a$ and
$\mathcal P_a$. The
corresponding abelian category of equivariant perverse sheaves is that of
\cite[Definition~7.7]{ABV92}. Let $\Xi_a$ be the set of equivalence classes
of complete geometric parameters with infinitesimal character $\OO_a$,
allowing all $Y$-blocks. A class $\xi\in\Xi_a$ is represented by a $\vG$-orbit
$S_\xi$ and an irreducible equivariant local system $V_\xi$ on it
\cite[Definition~7.6]{ABV92}. Write $P_a(\xi)$ for its perverse
intermediate extension and $\pi_a(\xi)$ for the corresponding irreducible
canonical projective representation of type $z$
\cite[(7.10)(d), Theorem~10.4, and Definition~10.8]{ABV92}.
The notation $P_a(\xi)$ and the pairing below are also used on the full
parameter space, the disjoint union of the $Y$-blocks;
$P_a(\xi)\in\mathcal P_a$ precisely when its support lies in the fixed
block. The Vogan duality
pairing of \cite[Theorem~15.12]{ABV92} is diagonal:
\begin{equation}\label{eq:vogan}
 \langle\pi_a(\xi),P_a(\xi')\rangle_a
 =
 e(\xi)(-1)^{d(\xi)}\delta_{\xi,\xi'}.
\end{equation}
Here $e(\xi)$ is the Kottwitz sign and $d(\xi)=\dim_{\CC}S_\xi$.

For $C\in\mathcal D_a$, its class in the Grothendieck group of
$\mathcal P_a$ is
\[
 \begin{aligned}
 [C]_{K_0}
   &:=\sum_j(-1)^j[{}^p\mathcal H^j(C)] \\
   &=\sum_\xi\left(\sum_j(-1)^j
      [{}^p\mathcal H^j(C):P_a(\xi)]\right)[P_a(\xi)].
 \end{aligned}
\]
Here $[Q:P_a(\xi)]$ denotes the Jordan--H\"older multiplicity of
$P_a(\xi)$ in a finite-length perverse sheaf $Q$.

Let $\nu_a$ be the regular Arthur covector. The normalized microstalk
functor $\chi_{\psi_a}^{\mic}\colon\mathcal P_a\to
\Rep(A_{\psi_a}^{\mic})$ is exact by \cite[Theorem~24.8(c)]{ABV92}.
It is zero on the other connected components and is extended by zero
to the other $Y$-blocks. At the regular Arthur
covector, the ABV microstalk functor gives the support criterion
\begin{equation}\label{eq:packet-criterion}
 \chi_{\psi_a}^{\mic}(P_a(\xi))\neq0
 \quad\Longleftrightarrow\quad
 \nu_a\in\SSupp(P_a(\xi)).
\end{equation}
This is \cite[Theorem~24.8(a),(c), Corollary~24.9(a), and
Proposition~19.12(c)]{ABV92}. We call $P_a(\xi)$ \emph{visible at $\nu_a$}
when \eqref{eq:packet-criterion} holds. By
\cite[Definition~22.6]{ABV92}, the Arthur packet of $\psi_a$ is
$\Pi^{z}(G^\Gamma)^{\mic}_{\psi_a}
=\{\pi_a(\xi):\chi_{\psi_a}^{\mic}(P_a(\xi))\neq0\}$.

\subsection{Positive translation, projective types, and the good-range bijection}

Let $\fu_2=\Lie(U_2)$ and $\vfu_2=\Lie(\vU_2)$ be the nilradicals
of the corresponding Jordan parabolics. Fix dual maximal tori
$H\subset M_2$ and $\vT\subset\vM_2$, and put $\fh=\Lie(H)$.
Write $\Delta(\fu_2,\fh)$ and $\Delta(\vfu_2)$ for the respective
nilradical root sets. Define the algebraic character on the original group by
\[
 \chi(m):=\det\!\left(\operatorname{Ad}(m)|_{\fu_2}\right),
 \qquad
 \mu_\chi=\sum_{\beta\in\Delta(\fu_2,\fh)}\beta
          =2\rho_{\fu_2}\in X^*(M_2)\subset X^*(H).
\]
Root datum duality $X^*(H)=X_*(\vT)$ identifies this weight with the
central algebraic cocharacter
$\chi^\vee\colon\CC^\times\to Z(\vM_2)$. In dual-root notation,
its class in $X_*(\vT)$ is $\sum_{\alpha\in\Delta(\vfu_2)}\alpha^\vee$,
the sum of the corresponding coroots.
For a sufficiently large positive integer $N$ (see
Lemma~\ref{lem:continuation-Levi}), let $\psi_2^N$ be the corresponding
central twist of
$\psi_2$, and let $\psi^N$ be its composite with
$\vM_2^\Gamma\hookrightarrow\vG^\Gamma$. Then we have the formula
\cite[(4.6.1)]{AIMV26} 
\begin{equation}\label{eq:packet-twist}
 \Pi^{z_2}(M_2^\Gamma)^{\mic}_{\psi_2^N}
 =
 \{\sigma\otimes\chi^{2N}:
   \sigma\in\Pi^{z_2}(M_2^\Gamma)^{\mic}_{\psi_2}\}.
\end{equation}
Put $\sigma^N=\sigma\otimes\chi^{2N}$. Let $\lambda_{2,f}$ and
$\lambda_{2,g}$ be the infinitesimal characters of $\psi_2$ and
$\psi_2^N$, and let $\lambda_f$ and $\lambda_g$ be their images for
$G^\Gamma$. With this determinant-character convention, the calculation in
\cite[proof of Theorem~4.6.1, immediately after (4.6.1)]{AIMV26} gives
\begin{equation}\label{eq:translation}
 \lambda_{2,g}-\lambda_{2,f}=2N\mu_\chi,
 \qquad
 \lambda_g-\lambda_f=2N\mu_\chi=4N\rho_{\fu_2}.
\end{equation}
In the geometric formulas, we regard $\lambda_f$ and $\lambda_g$ as elements
of $\Lie(\vT)\subset\vfg$.

For a semisimple element $\lambda\in\vfg$, write $
 \vfg_n(\lambda)
 =
 \{X\in\vfg:[\lambda,X]=nX\},
 \mathfrak n(\lambda)
 =
 \bigoplus_{n\in\ZZ_{>0}}\vfg_n(\lambda).
$
Following \cite[Section~3.1]{AIMV26}, the canonical flat through $\lambda$
is the affine subspace $\cF(\lambda)=\lambda+\mathfrak n(\lambda)\subset\vfg$.
For the two endpoints, define
\begin{equation}\label{eq:canonical-flats}
 \Lambda_f:=\cF(\lambda_f),
 \qquad
 \Lambda_g:=\cF(\lambda_g).
\end{equation}
Following \cite[Definition~4.1.2]{AIMV26}, an infinitesimal character
$\lambda\in\vfm_2$ is in good range for
$\vQ_2=\vM_2\vU_2$ if
\begin{equation}\label{eq:good-range-condition}
 \operatorname{Re}\alpha(\lambda)>0,
 \qquad
 \text{for every }\alpha\in\Delta(\vfu_2).
\end{equation}

Set $
 \kappa:=2\mu_\chi=4\rho_{\fu_2}\in X^*(H),
 \lambda_g=\lambda_f+N\kappa$.

\begin{lemma}\label{lem:continuation-Levi}
The positive integer $N$ may be chosen so that
\eqref{eq:good-range-condition} holds for
$\lambda=\lambda_g$ and there is a Levi subgroup
$L_N\subset G$ containing $H$ and a parabolic subalgebra $
 \fq_N=\mathfrak l_N\oplus\mathfrak u_N,
 \mathfrak l_N=\Lie(L_N)$
with the following properties:
\begin{enumerate}
\item $\lambda_g$ is in the good range for $\fq_N$ in the sense of
      \cite[Definition~2.5.3]{AIMV26};
\item $\lambda_g-\lambda_f=N\kappa$
      belongs to $X^*(L_N)$.
\end{enumerate}
\end{lemma}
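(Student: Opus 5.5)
The plan is to take $L_N:=M_2$ and $\fq_N:=\fq_2\oplus\fu_1$, the Lie algebra of the composite parabolic $Q_2U_1\subset G$ with Levi factor $M_2$. It contains $H$ by the choice of dual maximal tori.

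\textbf{Step 1: property (ii).} Property (ii) is then immediate for every $N$. The weight $\kappa=2\mu_\chi=4\rho_{\fu_2}$ is twice the differential of the determinant character $\chi$ of $M_2$ on $\fu_2$. So $N\kappa=d(\chi^{2N})$ is the differential of an algebraic character of $M_2=L_N$. Equivalently, under $X^*(H)=X_*(\vT)$ it corresponds to the central cocharacter $(\chi^\vee)^{2N}$ of $\vM_2$. This matches the identity $\lambda_g-\lambda_f=N\kappa$ already recorded in \eqref{eq:translation}.

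\textbf{Step 2: the condition \eqref{eq:good-range-condition}.} For $\alpha\in\Delta(\vfu_2)$ we have
\[
 \alpha(\lambda_g)=\alpha(\lambda_f)+N\langle\alpha,\kappa\rangle .
\]
The key point is that $\langle\alpha,\sum_{\beta\in\Delta(\vfu_2)}\beta^\vee\rangle>0$ for every root $\alpha$ of $\vfu_2$. To see this, choose a cocharacter $\varpi$ central in $\vM_2$ that defines $\vQ_2$, so that $\vfu_2$ is the sum of the positive $\varpi$-eigenspaces. The sum of the coroots of $\vfu_2$ is $\vM_2$-invariant, hence central, and it pairs positively with each root of $\vfu_2$. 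This is the usual fact that $2\rho_{\vfu_2}$ (here on the coroot side) is dominant-regular on the nilradical. The cleanest proof pairs the central element against $\varpi$-graded pieces: the pairing is constant on each $\vM_2$-irreducible constituent of $\vfu_2$ and positive on the lowest graded piece. Since $\Delta(\vfu_2)$ is finite, one $N_0$ works for all $\alpha$, and for $N\ge N_0$ we get $\operatorname{Re}\alpha(\lambda_g)>0$.

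\textbf{Step 3: property (i).} This step needs more care, because \cite[Definition~2.5.3]{AIMV26} concerns the roots of $\fu_N=\fu_2\oplus\fu_1$ on the group side, read through $\lambda_g\in\Lie(\vT)\cong\fh^*$. For roots in $\fu_2$, the estimate is the same as in Step 2 after root-datum duality. For roots $\gamma$ in $\fu_1$, I need two facts.
\begin{enumerate}
\item By the Jordan decomposition \cite[Definition~4.1.1]{AIMV26}, $\lambda_f$ is already in the good range for $\fq_1$: $\operatorname{Re}\langle\lambda_f,\gamma^\vee\rangle>0$ for all $\gamma\in\Delta(\fu_1)$.
\item $\langle\kappa,\gamma^\vee\rangle\ge0$ for all such $\gamma$.
\end{enumerate}
I expect fact (2) to be the main obstacle. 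I would try to obtain it by again writing $\kappa$ as a sum of roots of $\fu_2$, so that $\kappa=4\rho_{\fu_2}$ is dominant for the Borel of $M_1$ built from $\fq_2$. This alone does not control $\fu_1$-roots.

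If (2) fails, the fallback is to enlarge the choice of Levi. Take $L_N$ to be the centralizer in $G$ of the $\fh$-component of $\kappa$, a Levi subgroup containing $H$ and $M_2$, with $\fu_N$ the sum of root spaces on which $\kappa$ pairs positively. Then $\kappa$ is trivial on $[\fl_N,\fl_N]$, so (ii) still holds. Positivity on $\fu_N$ is strict by construction, hence for $N\gg0$ the term $N\langle\kappa,\gamma^\vee\rangle$ dominates $\operatorname{Re}\langle\lambda_f,\gamma^\vee\rangle$ for each of the finitely many $\gamma\in\Delta(\fu_N)$. This gives (i), together with whatever integrality or weak-positivity clause appears in Definition~2.5.3.

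The fallback still has to reconcile $\fq_N$ with the condition \eqref{eq:good-range-condition} on $\vfu_2$. The $\fu_2$-part is contained in $\fu_N$ exactly by the positivity of Step 2. I would check this containment last.
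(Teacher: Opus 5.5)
Your primary choice $L_N=M_2$, $\fq_N=\fq_2\oplus\fu_1$ does not work, and the problem is not only that fact~(2) is hard to prove: it is false in general. The weight $\kappa=4\rho_{\fu_2}$ is a combination of roots of $\fm_1$, and $\Delta(\fu_1)$ is $W(M_1)$-stable. Hence for every $\gamma\in\Delta(\fu_1)$ one has $\sum_{w\in W(M_1)}\langle\kappa,(w\gamma)^\vee\rangle=\langle\sum_{w}w\kappa,\gamma^\vee\rangle=0$, because the span of the roots of $\fm_1$ contains no nonzero $W(M_1)$-fixed vector. So (2) forces $\langle\kappa,\gamma^\vee\rangle=0$ on all of $\Delta(\fu_1)$.

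Concretely, take $\mathfrak{gl}_3$ with $\fm_1=\mathfrak{gl}_2\oplus\mathfrak{gl}_1$, $\fu_1=\fg_{e_1-e_3}\oplus\fg_{e_2-e_3}$ and $\fu_2=\fg_{e_1-e_2}$. Then $\kappa=2(e_1-e_2)$ and $\langle\kappa,(e_2-e_3)^\vee\rangle=-2$, so $\operatorname{Re}\langle\lambda_g,\gamma^\vee\rangle\to-\infty$ on that root of $\fu_N$ as $N$ grows.

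Fact~(1) is not supplied by the Jordan decomposition either. Since $\psi|_{W_\RR}$ is bounded, nothing forces $\operatorname{Re}\lambda_f$ to be positive on $\Delta(\fu_1)$. For a tempered parameter such as $\bigoplus_j|\cdot|^{it_j}$ of $GL_n(\RR)$ one has $\operatorname{Re}\lambda_f=0$, so (1) fails as soon as $\fu_1\neq0$.

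Your fallback, however, is a correct proof, by a route different from the paper's. Put $a_\alpha=\operatorname{Re}\langle\lambda_f,\alpha^\vee\rangle$ and $b_\alpha=\langle\kappa,\alpha^\vee\rangle\in\ZZ$. You take $\mathfrak l_N=\fh\oplus\bigoplus_{b_\alpha=0}\fg_\alpha$ and $\fu_N=\bigoplus_{b_\alpha>0}\fg_\alpha$. Then (2) holds by construction. Phrase it as: $\kappa$ vanishes on the coroots of the connected group $L_N\supset H$, which characterizes $X^*(L_N)$, rather than as triviality on $[\mathfrak l_N,\mathfrak l_N]$. Property (1) only needs $N>\max_\gamma|a_\gamma|$, because $b_\gamma\geq1$ on $\Delta(\fu_N)$.

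The paper instead cuts out $\fq_N$ by the sign of $\operatorname{Re}\langle\lambda_g,\alpha^\vee\rangle=a_\alpha+Nb_\alpha$, so (1) is automatic. It obtains (2) by avoiding the finitely many $N$ with $a_\alpha+Nb_\alpha=0\neq b_\alpha$, which forces $b_\alpha=0$ on $\mathfrak l_N$. Its Levi therefore has root set $\{\alpha:a_\alpha=b_\alpha=0\}$, contained in yours.

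The comparison is this:
\begin{itemize}
\item The paper's $\fq_N$ is the canonical parabolic of $\operatorname{Re}\lambda_g$.
\item Yours is independent of $N$ and needs no genericity, at the price of a larger Levi on whose roots $\operatorname{Re}\lambda_g$ may be nonzero.
\end{itemize}
That larger Levi is harmless, since good range constrains only $\Delta(\fu_N)$ and the rest of the paper uses only (1) and (2).

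Finally, the reconciliation you planned to check last is unnecessary. Condition \eqref{eq:good-range-condition} concerns $\lambda_g$ alone and is already settled by your Step~2, independently of the choice of $\fq_N$.
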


\begin{proof}
For $\alpha\in\Delta(\fg,\fh)$, put
$a_\alpha=\operatorname{Re}\langle\lambda_f,\alpha^\vee\rangle$ and
$b_\alpha=\langle\kappa,\alpha^\vee\rangle\in\ZZ$.
Under root datum duality, for every $\alpha\in\Delta(\vfu_2)$ and
the corresponding $\beta\in\Delta(\fu_2,\fh)$ one has
$\alpha(\mu_\chi)=\langle2\rho_{\fu_2},\beta^\vee\rangle>0$
by \cite[Lemma~2.5.4]{AIMV26}. Hence
\eqref{eq:good-range-condition} holds for all sufficiently large $N$.
Since there are only finitely many roots, we
may at the same time avoid the finitely many values of $N$ for which
$a_\alpha+Nb_\alpha=0$ for at least one root $\alpha$ with
$b_\alpha\neq0$. Thus
\begin{equation}\label{eq:generic-N}
 a_\alpha+Nb_\alpha\neq0
 \qquad
 \text{whenever }b_\alpha\neq0.
\end{equation}
Define
\[
\begin{aligned}
 \mathfrak l_N
 &=
 \fh\oplus
 \bigoplus_{a_\alpha+Nb_\alpha=0}\fg_\alpha,
 \qquad
 \mathfrak u_N = \bigoplus_{a_\alpha+Nb_\alpha>0}\fg_\alpha,
 \qquad
 \fq_N=\mathfrak l_N\oplus\mathfrak u_N.
\end{aligned}
\]
Choose a positively definite $W(G,H)$-invariant form on $X^*(H)_\RR$ and use
it to identify $X^*(H)_\RR$ with $X_*(H)_\RR$. Since
$\alpha^\vee=2\alpha/(\alpha,\alpha)$ under this identification, the signs
of $\operatorname{Re}\langle\lambda_g,\alpha^\vee\rangle$ give the usual
parabolic root decomposition. The roots for which
$a_\alpha+Nb_\alpha=0$ form the Levi subsystem annihilated by
$\operatorname{Re}\lambda_g$. The zero and positive root spaces therefore
define a parabolic subalgebra $\fq_N$ with Levi factor $\mathfrak l_N$.
The same root subsystem determines a connected algebraic Levi subgroup
$L_N\subset G$ containing $H$, with Lie algebra $\mathfrak l_N$.
For every root $\alpha$ of $\mathfrak u_N$, one has
$\operatorname{Re}\langle\lambda_g,\alpha^\vee\rangle
=a_\alpha+Nb_\alpha>0$, so $\lambda_g$ is in the good range of
\cite[Definition~2.5.3]{AIMV26}.

If $\alpha$ is a root of $\mathfrak l_N$, then
$a_\alpha+Nb_\alpha=0$, and \eqref{eq:generic-N} forces
$b_\alpha=0$. Thus the integral weight $\kappa$ vanishes on every
coroot of the derived group of $L_N$. Since $L_N$ is connected with maximal
torus $H$, restriction to $H$ identifies $X^*(L_N)$ with the weights
$\nu\in X^*(H)$ satisfying $\langle\nu,\alpha^\vee\rangle=0$ for every root
$\alpha$ of $L_N$.
It follows that $\kappa\in X^*(L_N)$, and hence
$\lambda_g-\lambda_f=N\kappa\in X^*(L_N)$.
\end{proof}

Fix $N$ satisfying the lemma and define
$T=T_{L_N}(\lambda_g\to\lambda_f)$ to be the coherent continuation operator. It maps
$K\Pi_{\lambda_g}^z(G^\Gamma)$ to $K\Pi_{\lambda_f}^z(G^\Gamma)$, and the
coherent continuations on the individual
$(\fg,K_\gamma)$-module Grothendieck groups, defined in
\cite[Definition~2.10.3]{AIMV26}, are summed over strong real forms; see
\eqref{eq:strong-real-form-sum}--\eqref{eq:summandwise-continuation}.

Write $
 R=R_{\fq_1}^{\fg}R_{\fq_2}^{\fm_1},
 \widetilde R
 =e(G^\Gamma)R_{\fq_1}^{\fg}R_{\fq_2}^{\fm_1}e(M_2^\Gamma).
$
Here $e(G^\Gamma)$ and $e(M_2^\Gamma)$ are the Kottwitz signs of
\cite[Theorem~4.2.3]{AIMV26}. Define the restrictions of $R$ and
$\widetilde R$ by
\[
\begin{aligned}
 R_N,\widetilde R_N\colon
 K\Pi_{\lambda_{2,g}}^{z_2}(M_2^\Gamma)
 &\longrightarrow K\Pi_{\lambda_g}^{z}(G^\Gamma),\\
 R_0,\widetilde R_0\colon
 K\Pi_{\lambda_{2,f}}^{z_2}(M_2^\Gamma)
 &\longrightarrow K\Pi_{\lambda_f}^{z}(G^\Gamma).
\end{aligned}
\]

For the translated parameters $\psi_2^N$ and $\psi^N$,
\cite[Remark~4.5.2]{AIMV26} gives a bijection $
 R_N\colon
 \Pi^{z_2}(M_2^\Gamma)^{\mic}_{\psi_2^N}
 \longrightarrow
 \Pi^z(G^\Gamma)^{\mic}_{\psi^N}
$.
This map sends each irreducible, up to sign, to a single nonzero irreducible.
Thus, for every $\sigma^N$ there are unique
$\tau_g\in\Pi^z(G^\Gamma)^{\mic}_{\psi^N}$ and signs
$\beta_{\sigma,N},\alpha_{\sigma,N}\in\{\pm1\}$ such that
\begin{equation}\label{eq:good-range}
 R_N\sigma^N=\beta_{\sigma,N}\tau_g,
 \qquad
 \widetilde R_N\sigma^N=\alpha_{\sigma,N}\tau_g.
\end{equation}
The second equality follows because the Kottwitz operators are diagonal.

\subsection{Connected components and the flag-variety model}

\begin{lemma}\label{lem:fiber-model}
The two endpoints have the same geometric $y$-coordinate and the same
$t=\exp(2\pi i\lambda_a)=y^2$. Put
\[
 \widehat{\vR}=Z_{\vG}(t),\qquad
 \vR=\widehat{\vR}^{\circ},\qquad
 \widehat{\vK}=Z_{\vG}(y),\qquad
 \vK=\widehat{\vK}\cap\vR,
 \qquad \vtheta=\operatorname{Ad}(y)|_{\vR}.
\]
For $a\in\{f,g\}$, let $\vP_a=\Stab_{\vG}(\Lambda_a)$, a connected
parabolic subgroup of $\vR$, let $\vfu_a$ be the nilradical of
$\Lie(\vP_a)$, and put
$X_a=\vR/\vP_a$. The full $Y$-block and its Arthur connected component are
\begin{equation}\label{eq:full-and-Arthur-models}
 \mathcal X_a\simeq
 \vG\times^{\widehat{\vK}}(\widehat{\vR}/\vP_a),
 \qquad
 \mathcal X_a^\psi\simeq\vG\times^{\vK}X_a.
\end{equation}
Write $\vK^{\alg}$ for the full inverse image of $\vK$ under
$\vG^{\alg}\to\vG$. For the inclusion $i_a:X_a\hookrightarrow
\mathcal X_a^\psi$ of the fiber over $1\vK$, the normalized restriction
\[
 \operatorname{Fib}_a=i_a^*[-d]:\mathcal D_a^\psi
 \xrightarrow{\sim}D^bC^{\vK^{\alg}}(X_a),
 \qquad d=\dim_{\CC}(\vG/\vK),
\]
is an equivalence whose restriction to perverse hearts is an equivalence.
It identifies the Arthur conormal datum and microstalk with those at
$\nu_a=(x_a,e)$, where $x_a=1\vP_a$.

Let $Z=\vR/(\vP_g\cap\vP_f)$ with natural projections $q_a:Z\to X_a$,
and put $I_{\mathrm{fib}}=(q_g)_!q_f^*$. Let
$I_\psi:\mathcal D_f^\psi\to\mathcal D_g^\psi$ be the functor transported
from $I_{\mathrm{fib}}$ by the two normalized equivalences.
The global convolution $I=I(\Lambda_f\to\Lambda_g)$ satisfies
\begin{equation}\label{eq:component-convolution}
 j_g^* I\simeq I_\psi j_f^*,
 \qquad
 Ij_{f!}\simeq j_{g!}I_\psi,
 \qquad
 \operatorname{Fib}_g I_\psi\simeq
 I_{\mathrm{fib}}\operatorname{Fib}_f.
\end{equation}
\end{lemma}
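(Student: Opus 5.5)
We follow the ABV geometric description of $Y$-blocks \cite[Section~6]{ABV92}: a complete geometric parameter space with infinitesimal character $\OO_a$ is $\vG\times^{\vG}$-induced from the fiber over a fixed representative $\lambda_a$. Concretely, $X(\vG^\Gamma,\OO_a)\simeq\vG\times^{Z_{\vG}(\lambda_a)}\{\text{pairs }(y,\Lambda)\}$, where $\Lambda$ runs over canonical flats conjugate to $\Lambda_a$ and $y$ runs over elements of $\vG^\Gamma-\vG$ with $y^2=\exp(2\pi i\Lambda)$.

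\textbf{Step 1: the fixed invariants $y$ and $t$.} The translation $\lambda_g-\lambda_f=N\kappa$ is integral, since $\kappa$ lies in $X^*(H)=X_*(\vT)$. Hence $\exp(2\pi i\lambda_g)=\exp(2\pi i\lambda_f)$. Moreover $\psi^N$ differs from $\psi$ only by a central twist valued in $Z(\vM_2)$, and this twist is trivial on the element $y=\psi(j)$ up to $\vG$-conjugacy. So the Arthur geometric points of $\psi$ and $\psi^N$ share the same $y$, and therefore the same $t=y^2$. It follows that $\widehat{\vR}$, $\vR$, $\widehat{\vK}$, $\vK$ and $\vtheta$ coincide at the two endpoints.

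\textbf{Step 2: the model \eqref{eq:full-and-Arthur-models}.} Fixing $y$, the flats $\Lambda$ in the $\vG$-orbit of $\Lambda_a$ with $\exp(2\pi i\Lambda)=t$ form $\widehat{\vR}/\vP_a$, since $\vP_a$ is the parabolic of $\vR$ attached to $\lambda_a$ (ABV Prop.~6.16). Taking the quotient by the stabilizer $\widehat{\vK}$ of $y$ gives the first isomorphism. The connected components correspond to $\widehat{\vK}\backslash\widehat{\vR}/\vR$-cosets, i.e. to components of $\widehat{\vR}/\vP_a$. The component through $x_a$ is $\vR/\vP_a$, whose stabilizer in $\widehat{\vK}$ is $\vK=\widehat{\vK}\cap\vR$; this gives the second isomorphism. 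The equivalence $\operatorname{Fib}_a$ is then the standard induction equivalence $D^b_{\vG}(\vG\times^{\vK}X)\simeq D^b_{\vK}(X)$, transferred to the $\vG^{\alg}$-equivariant setting. The shift $[-d]$ makes it t-exact. Since pullback along a smooth map is compatible with conormal vectors, it also matches the conormal fibers at the Arthur point, $(x_a,e)$ with $e\in\vfu_a\cap\vfg^{-\vtheta}$, and their microstalks.

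\textbf{Step 3: convolution.} The global convolution $I(\Lambda_f\to\Lambda_g)$ is defined by the correspondence of pairs of flats in relative position. Over a fixed $y$, this correspondence is $\vG\times^{\widehat{\vK}}\widehat{\vR}/(\vP_f\cap\vP_g)$. Because $\vP_f\cap\vP_g\subset\vR$ is connected, the correspondence preserves the component decomposition, and each component pairs only with the matching component. This gives the first two identities in \eqref{eq:component-convolution}. The third follows from base change along the fiber over $1\vK$; the shifts $[-d]$ on the two sides agree because $d$ is the same at both endpoints.

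\textbf{Expected main obstacle.} The delicate point is Step 1 together with the identification of the correct component on the $g$ side. We must check that the central twist fixes $y$ up to conjugacy by an element of $\vR$, and not merely of $\widehat{\vR}$, so that the Arthur point of $\psi^N$ lands in the component $\vR/\vP_g$ over the same base point. I would handle this by choosing $\chi^\vee(\CC^\times)$, which lies in the identity component of $Z_{\vG}(t)$, as the conjugating path.
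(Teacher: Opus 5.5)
\textbf{Verdict: there is a genuine gap, in Step~1.} Steps~2 and~3 are essentially the paper's argument. The first uses the model of \cite[Proposition~6.16]{ABV92}, with components labelled by $\widehat{\vK}\backslash\widehat{\vR}/\vR$; these are $\widehat{\vK}$-orbits of components of $\widehat{\vR}/\vP_a$, not the components themselves. The identity label gives $\vG\times^{\vK}X_a$. The second is open--closed base change, using that both projections of $\widehat{\vR}/(\vP_g\cap\vP_f)$ preserve labels. The reason is $\vP_g\cap\vP_f\subset\vR$, not connectedness of the intersection.

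\textbf{The gap.} The geometric $y$-coordinate of the Arthur point is not $\psi(j)$. By \cite[Proposition~3.1.2]{AIMV26} it is $y_a=\exp(\pi i\lambda_a)\varphi_a(j)$, and this is what makes $y_a^2=\exp(2\pi i\lambda_a)$. With $y=\varphi(j)$ you would instead get $y^2=\varphi(-1)$.

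Triviality of the twist at $j$ only gives $\varphi_g(j)=\varphi_f(j)$. Hence $y_g=\exp(\pi i(\lambda_g-\lambda_f))\,y_f$, and you must show that this prefactor equals $1$. The integrality you invoke, $N\kappa\in X_*(\vT)$, only yields $\exp(2\pi iN\kappa)=1$, i.e.\ $t_g=t_f$. On its own it leaves $y_g$ and $y_f$ differing by $(N\kappa)(-1)$, an element of order dividing two.

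\textbf{What is missing.} The shift is twice a cocharacter: $\lambda_g-\lambda_f=2N\mu_\chi$ with $\mu_\chi=d\chi^\vee(1)$. Therefore $\exp(\pi i(\lambda_g-\lambda_f))=\chi^\vee(e^{2\pi iN})=1$, and $y_g=y_f$ exactly.

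\textbf{The conjugation remedy does not work.} Your fallback of conjugating by $\chi^\vee(\CC^\times)$ is not justified. You would need to know how $\operatorname{Ad}(y)$ acts on $\chi^\vee$, which you do not analyze. It is also not enough for the statement. The lemma asserts literal equality of $y$, hence literally the same $\widehat{\vK}$, $\vK$ and $\vtheta$ at both endpoints. That equality is what permits a single correspondence $X_g\leftarrow Z\to X_f$ over one $\vK$. Conjugating $y$ by an element of $\vR$ would in general replace $\widehat{\vK}$ by a conjugate subgroup.

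Once $y_g=y_f$ holds on the nose, the component question in your last paragraph disappears. The point $(y,\Lambda_g)$ lies in the identity-label component simply because $\Stab_{\vG}(\Lambda_g)=\vP_g\subset\vR$.
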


\begin{proof}
Let $\varphi_a=\varphi_{\psi_a}$ be the Langlands parameter associated with
$\psi_a$. By \cite[Proposition~3.1.2]{AIMV26}, the first coordinate of the
corresponding geometric parameter is
$y_a=\exp(\pi i\lambda_a)\varphi_a(j)$.
The positive twist is trivial on $j$ and on the Arthur $SL_2(\CC)$;
see the proof of \cite[Theorem~4.6.1, before (4.6.1)]{AIMV26}.
Hence $\varphi_g(j)=\varphi_f(j)$, and $e$ is the same at both endpoints.
The duality of the fixed tori identifies $\mu_\chi$ with
$d\chi^\vee(1)\in\Lie(\vT)$. Therefore
\[
 \exp(\pi i(\lambda_g-\lambda_f))=\chi^\vee(e^{2\pi iN})=1,
 \qquad
 \exp(2\pi i(\lambda_g-\lambda_f))=\chi^\vee(e^{4\pi iN})=1.
\]
Thus $y_g=y_f=:y$ and $t_g=t_f=:t$, so the full centralizers, as well
as their indicated subgroups, coincide.

By \cite[Lemma~6.3(d) and Proposition~6.5(c)]{ABV92}, the full stabilizer
$\vP_a$ of $\Lambda_a$ is connected and contained in $\vR$.
The first model in \eqref{eq:full-and-Arthur-models} is
\cite[Proposition~6.16]{ABV92}, using the full centralizers.
The connected components of $\widehat{\vR}/\vP_a$ are indexed by
$\widehat{\vR}/\vR$. Consequently the connected components of
$\mathcal X_a$ are indexed by the finite double-coset set
\begin{equation}\label{eq:component-labels}
 \mathfrak C=\widehat{\vK}\backslash\widehat{\vR}/\vR,
 \qquad [g,r\vP_a]\longmapsto\widehat{\vK}r\vR.
\end{equation}
Indeed, these labels define open and closed subsets; the subset with
label $\widehat{\vK}\vR$ is
\[
 \vG\times^{\widehat{\vK}}
       (\widehat{\vK}\vR/\vP_a)
 \simeq\vG\times^{\vK}(\vR/\vP_a),
\]
which is connected because $\vG$ and $\vR/\vP_a$ are connected.
The same argument after conjugating by $r$ applies to every label.
The identity label contains $(y,\Lambda_a)$ and hence gives
$\mathcal X_a^\psi$. This is the connected-component description of
\cite[(6.23) and Proposition~6.24]{ABV92}.
In particular, $\vK=\widehat{\vK}\cap\vR$ is not being replaced by
$\vK^\circ$.

The induced-space equivalence is
\cite[Proposition~7.14]{ABV92}; the convention on $\vK^{\alg}$ is that
of \cite[(5.10) and (8.4)]{ABV92}.
Every orbit in $\vG\times^{\vK}X_a$ is $\vG\times^{\vK}S$ for a
$\vK$-orbit $S\subset X_a$, of dimension $d+\dim_{\CC}S$.
A local analytic section of $\vG\to\vG/\vK$ identifies the orbit
stratification with the product of a smooth $d$-dimensional base and
that on $X_a$. Thus $i_a^*[-d]$ gives the perverse normalization.
More generally, an equivariant bounded complex is locally the pullback
of its fiber restriction along this product projection. The smooth
inverse-image formula for singular support therefore identifies its
corresponding conormal directions, including for complexes that are
not perverse.
Under this identification, the Arthur covector is $(x_a,e)$ by
\cite[Propositions~3.7.1 and~3.7.3]{AIMV26}.
The point and conormal stabilizers are unchanged: an element of
$\widehat{\vK}$ fixing $x_a$ lies in $\vP_a\subset\vR$, hence in
$\vK$, and the same holds in their full inverse images in
$\vG^{\alg}$. Together with the normal-slice identification, this
preserves the Arthur microstalk and its component-group representation;
see \cite[Propositions~7.14 and~21.3]{ABV92}.

To check convolution, retain the full correspondence
\begin{equation}\label{eq:full-incidence}
 \widehat{\mathcal Z}
 :=\vG\times^{\widehat{\vK}}
       \bigl(\widehat{\vR}/(\vP_g\cap\vP_f)\bigr),
 \qquad Q_a:\widehat{\mathcal Z}\longrightarrow\mathcal X_a,
 \qquad I=(Q_g)_!Q_f^*.
\end{equation}
This realizes the convolution of \cite[Section~3.5]{AIMV26} on all
connected components. The relation with the global duality theorem is
spelled out in the next subsection.
Both projections send $[g,r(\vP_g\cap\vP_f)]$ to a point with label
$\widehat{\vK}r\vR$ in \eqref{eq:component-labels}. Hence
\begin{equation}\label{eq:equal-component-preimages}
 Q_g^{-1}(\mathcal X_g^\psi)
 =Q_f^{-1}(\mathcal X_f^\psi)
 =:\mathcal Z^\psi
 \simeq\vG\times^{\vK}Z.
\end{equation}
Let $j_Z:\mathcal Z^\psi\hookrightarrow\widehat{\mathcal Z}$ be the
open and closed inclusion and $Q_a^\psi$ the restricted projections.
Open and closed base change gives, for every $C\in\mathcal D_f$,
\[
 \begin{aligned}
 j_g^*(Q_g)_!Q_f^*C
 &\simeq(Q_g^\psi)_!j_Z^*Q_f^*C\\
 &\simeq(Q_g^\psi)_!(Q_f^\psi)^*j_f^*C.
 \end{aligned}
\]
The same equality of inverse images gives
$I j_{f!}\simeq j_{g!}(Q_g^\psi)_!(Q_f^\psi)^*$.
Under the induced-space equivalences, the restricted correspondence is
$X_g\leftarrow Z\to X_f$; the normalization $[-d]$ is the same on both
sides. Thus $(Q_g^\psi)_!(Q_f^\psi)^*$ is $I_\psi$, proving
\eqref{eq:component-convolution}. This base-change argument requires
no properness of $Q_g$.
\end{proof}

We use the same symbol $\nu_a$ for the corresponding Arthur conormal
data on $\mathcal X_a^\psi$ and $X_a$; the ambient space in a
singular-support statement is the space of the displayed complex.

\subsection{Continuation--convolution duality}

Here global convolution means \eqref{eq:full-incidence} on each
$Y$-block, and the direct sum over $Y$ on the full parameter spaces.

\begin{proposition}[Global continuation--convolution duality]
\label{prop:global-duality}
For the endpoints chosen above, put $I_K=K_0(I)$. The continuation
$T=T_{L_N}(\lambda_g\to\lambda_f)$ and the full convolution
$I(\Lambda_f\to\Lambda_g)$ are transpose under the Vogan pairings:
\begin{equation}\label{eq:duality}
 \langle Tv,x\rangle_f
 =
 \langle v,I_Kx\rangle_g
\end{equation}
for every representation Grothendieck class $v$ at the $g$-endpoint
and every sheaf Grothendieck class $x$ at the $f$-endpoint, allowing
all $Y$-blocks and all their connected components.
\end{proposition}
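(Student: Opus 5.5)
The plan is to reduce \eqref{eq:duality} to the duality theorem in ABV/AIMV relating translation (coherent continuation) functors on the representation side to pushforward--pullback along the flat-incidence correspondence on the geometric side. Both sides of \eqref{eq:duality} are bilinear, so it suffices to check the identity on bases. We take $v=\pi_g(\xi)$ with $\xi\in\Xi_g$ and $x=[P_f(\xi')]$ with $\xi'\in\Xi_f$. By \eqref{eq:vogan} the pairing is perfect and diagonal. The identity therefore says that the matrix of $T$ in the irreducible bases is, after the signs $e(\xi)(-1)^{d(\xi)}$, the transpose of the matrix of $I_K$ in the bases of simple perverse sheaves. Equivalently, we may instead test against standard modules and standard sheaves, namely extensions by zero $\mu(\xi)$ of local systems on orbits. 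We work with standards, because both sides are computable there.

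First, on the representation side, we use that $T_{L_N}(\lambda_g\to\lambda_f)$ is defined summandwise over strong real forms, as in \eqref{eq:strong-real-form-sum}--\eqref{eq:summandwise-continuation}. We then use \cite[Definition~2.10.3]{AIMV26}: the coherent family through a standard module at $\lambda_g$ evaluates at $\lambda_f$ to the standard module with the same Cartan and character datum, shifted by $-N\kappa$. Since $N\kappa\in X^*(L_N)$ by Lemma~\ref{lem:continuation-Levi}(2), this shift is a character of $L_N$, and at a singular $\lambda_f$ the resulting standard may vanish or coincide with another. Second, on the geometric side, we compute $I_K$ on standard sheaves using the full correspondence \eqref{eq:full-incidence}. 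The pullback $Q_f^*$ of a standard sheaf on a $\vK$-orbit in $\widehat{\vR}/\vP_f$ is a standard object on its preimage. That preimage fibers over $\widehat{\vR}/\vP_g$ with fibers affine spaces or empty, following the analysis of \cite[Section~3.5]{AIMV26} and \cite[Section~6]{ABV92}. Hence $(Q_g)_!$ of it is a sum of standard sheaves with shifts.

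Third, we match the two formulas on standards using the standard Vogan pairing between standard modules and standard sheaves, which is again diagonal up to the same signs. The parity of the fiber dimensions has to produce exactly the factor $(-1)^{d(\xi)-d(\xi')}$ and the Kottwitz sign compatibility. The full centralizers $\widehat{\vK}$ and $\widehat{\vR}$ are used throughout, so all connected components and $Y$-blocks are covered. By Lemma~\ref{lem:fiber-model} the relevant $y$ and $t$ agree at both endpoints, so the same $Y$ indexes both sides.

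I expect the main obstacle to be the third step. It requires verifying that the combinatorics of standard modules under continuation to a possibly singular $\lambda_f$ correspond exactly to the orbit-fibration structure of $Q_g$ restricted to $Q_f^{-1}$ of an orbit. This includes the vanishing cases on the representation side matching empty or contractible-with-cancelling fibers on the geometric side. If a direct match proves hard, I would first try to invoke \cite[Section~16]{ABV92}, the translation-principle duality for the equal-flat case. I would then factor $I(\Lambda_f\to\Lambda_g)$ through the intermediate flat of $\fq_N$, reducing to a regular-to-singular translation inside $L_N$. There the known ABV statement applies.
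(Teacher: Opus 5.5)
Reducing to standard objects is harmless: by the definition of the ABV pairing, standard modules and standard sheaves are dual bases up to sign. The problem is that Steps~2--3 carry the entire content of \eqref{eq:duality}, they are not carried out, and the geometric input of Step~2 is wrong.

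The preimage $Q_f^{-1}(S)$ of a $\vK$-orbit does not fibre over $\widehat{\vR}/\vP_g$ with affine or empty fibres. Even when the fibres $\vP_g/(\vP_g\cap\vP_f)$ of $Q_g$ are cells, the fibres of $Q_f^{-1}(S)\to\widehat{\vR}/\vP_g$ are their intersections with translates of $S$. Already for $\vR=SL_2$, $\vK$ the diagonal torus and $\vP_f,\vP_g$ opposite Borel subgroups, the fibres of $q_f^{-1}(S)\to X_g$ over the open orbit $S$ are $\CC^\times$ or $\CC^\times$ minus a point.

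So $I_K$ on standard sheaves is governed by equivariant compactly supported cohomology of such pieces with local-system coefficients. This is the compact/noncompact imaginary, type I/II, real-root case analysis of the Hecke action. You would have to match it against continuation of standard modules to a singular $\lambda_f$, where they may vanish or must be rewritten in final standards via Hecht--Schmid identities. That matching is the theorem itself, not a parity count of fibre dimensions.

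Your fallback does not close the gap either. The translation principle of \cite[Section~16]{ABV92} treats nested parabolics, i.e.\ $\lambda_f$ in the closed chamber of $\lambda_g$. In general there are roots $\alpha$ of $\vR$ with $\alpha(\lambda_f)<0<\alpha(\lambda_g)$, which is the bad-range situation the paper is about. The continuation therefore crosses walls and needs the coherent-continuation action at a regular point plus Euler-characteristic bookkeeping.

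The missing idea is that none of this has to be redone. The paper applies the continuation--convolution duality \cite[Theorem~3.6.3]{AIMV26} with $L=L_N$, $\lambda'=\lambda_g$, $\lambda=\lambda_f$, $\Lambda=\Lambda_f$, $\Lambda'=\Lambda_g$. Its hypotheses (GR1) and (GR2) are exactly parts (1) and (2) of Lemma~\ref{lem:continuation-Levi}, which is why that lemma is set up as it is. You never use the good-range conclusion (1). Yet it is what makes $T$ well defined, and your Step~1 needs it to identify the continuation of a standard module with the shifted standard.

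What remains is to justify the theorem on the full direct sum over all $Y$-blocks and components. By Lemma~\ref{lem:fiber-model}, both projections of the full correspondence preserve the component labels, so convolution and the diagonal pairing split over components. The ingredients of the regularization proof are unchanged on passing to full centralizers, so no extra component factor appears. These ingredients are:
\begin{enumerate}
\item the regular Hecke action on connected flag models \cite[(8.9), Propositions~16.13 and~17.16]{ABV92};
\item pullback along flag projections when $\vP'\subseteq\vP$ \cite[Propositions~8.8 and~16.6]{ABV92};
\item the composition calculation \cite[Lemma~3.5.3]{AIMV26} with fibres $(\vP\cap\vP')/(\vP_r\cap\vP')$.
\end{enumerate}
Your remark that the full centralizers are ``used throughout'' stands in for this argument without supplying it.
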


\begin{proof}
Lemma~\ref{lem:fiber-model} realizes the full correspondence on each
$Y$-block and shows that its two projections preserve the labels
\eqref{eq:component-labels}. Thus its sheaf functor decomposes over
the corresponding connected components. The diagonal Vogan pairing
\eqref{eq:vogan} makes the associated Grothendieck summands orthogonal.
In the regular case, the geometric Hecke action is the direct sum of
the actions on the connected flag models
\cite[(8.9), Propositions~16.13 and~17.16]{ABV92}.
When $\vP'\subseteq\vP$, convolution is pullback along the global
flag projection of \cite[Proposition~8.8]{ABV92}; its parameter map
agrees with translation by \cite[Proposition~16.6]{ABV92}.
The composition calculation in \cite[Lemma~3.5.3]{AIMV26} uses fibers
of the form $(\vP\cap\vP')/(\vP_r\cap\vP')$, which are unchanged
on passing to the full centralizer or to any connected component.
Consequently the regularization argument proving
\cite[Theorem~3.6.3]{AIMV26} applies to this direct sum, with the
same Euler-characteristic factors and no additional component factor.
This does not presuppose that representation-theoretic continuation
preserves the connected-component decomposition.

With the canonical flats defined in
\eqref{eq:canonical-flats}, we apply
\cite[Theorem~3.6.3]{AIMV26} with $L=L_N$, $\lambda'=\lambda_g$,
$\lambda=\lambda_f$, $\Lambda=\Lambda_f$, and $\Lambda'=\Lambda_g$.
The two hypotheses \textup{(GR1)} and \textup{(GR2)} are
Lemma~\ref{lem:continuation-Levi}(1) and (2), respectively. The theorem,
realized on the full direct sum as above, gives \eqref{eq:duality}.
We use the sign convention of \cite[Theorem~3.6.3]{AIMV26} in
\eqref{eq:duality}.
\end{proof}

\subsection{The three-parabolics identity}

We shall use the following specialization of the three-parabolics theorem
\cite[Theorem~A.2.1]{AIMV26}.

\begin{lemma}\label{lem:three-parabolics}
For the translation datum $(\Lambda_f,\Lambda_g)$ constructed above, with
$\lambda_g-\lambda_f=4N\rho_{\fu_2}$ and $N\gg0$, the set of
$r\in\overline{\vP_f\vP_g}$ satisfying
$\operatorname{Ad}(r)e\in\vfu_f$ is exactly $\vP_f$.
\end{lemma}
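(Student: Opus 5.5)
The plan is to reduce the statement to the three-parabolics theorem \cite[Theorem~A.2.1]{AIMV26} and check its hypotheses for the pair $(\Lambda_f,\Lambda_g)$. First I will identify the geometry. By Lemma~\ref{lem:fiber-model}, $\vP_f$ and $\vP_g$ are parabolic subgroups of $\vR$ containing $\vT$. Their Lie algebras are $\vfp_a=\bigoplus_{n\ge0}\vfg_n(\lambda_a)\cap\vfr$, with nilradicals $\vfu_a=\mathfrak n(\lambda_a)\cap\vfr$. The closure $\overline{\vP_f\vP_g}$ is a union of double cosets $\vP_f w\vP_g$, with $w$ running over a Bruhat interval in $W(\vR,\vT)$. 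The Arthur nilpotent $e$ lies in $\vfg_2(\lambda_f)$, since $\lambda_f=h/2+(\text{tempered part})$ and the twist commutes with $e$. Because the twist is central in $\vM_2$, $e$ also lies in $\vfg_2(\lambda_g)$. Moreover $e\in\vfm_2$, so $e\in\vfu_f\cap\vfu_g\cap\vfm_2$.

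The inclusion $\vP_f\subseteq\{r:\operatorname{Ad}(r)e\in\vfu_f\}$ is immediate, because $\vP_f$ normalizes $\vfu_f$ and contains the identity. For the reverse inclusion, write $r=p\,\dot w\,p'$ with $p\in\vP_f$, $p'\in\vP_g$, and $w$ a minimal-length double-coset representative. The condition becomes $\operatorname{Ad}(\dot w p')e\in\vfu_f$. As $p'$ ranges over $\vP_g$, $\operatorname{Ad}(p')e$ ranges over the $\vP_g$-orbit of $e$ in $\vfu_g$. For $N\gg0$, the grading by $\lambda_g=\lambda_f+4N\rho_{\fu_2}$ is a refinement: its nonnegative part is contained in $\vfp_f\cap\vfm_2$ plus the positive part of $\vfu_2$. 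In particular $\vfp_g\subseteq\vfp_f\cap\vfm_2+\vfu_2$ and $\vP_g\subseteq\vQ_2'\cdot$(stuff in $\vP_f$) inside $\vR$. This is exactly the configuration of three parabolics treated in the appendix of \cite{AIMV26}: $\vP_f$, $\vP_g$, and the Jordan parabolic $\vQ_2\cap\vR$ defined by $\rho_{\fu_2}$.

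I will then invoke \cite[Theorem~A.2.1]{AIMV26}. Its content, as used in \cite[Lemma~4.6.2]{AIMV26}, is that for $e$ Richardson-regular in $\vfu_f$ (the Arthur covector is regular, as in \eqref{eq:packet-criterion}) and with the translation in the good range of Lemma~\ref{lem:continuation-Levi}, the only Bruhat cell in $\overline{\vP_f\vP_g}$ meeting $\{r:\operatorname{Ad}(r)e\in\vfu_f\}$ is the identity cell. Within the identity cell, the condition holds for all of $\vP_f\vP_g=\vP_f(\vP_g\cap\ldots)$. We must then show that $\vP_f\vP_g$ reduces to $\vP_f$ on this locus, i.e.\ that elements $p'\in\vP_g$ with $\operatorname{Ad}(p')e\in\vfu_f$ lie in $\vP_f$. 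This follows from $\vP_g\cap\vP_f\supseteq\vP_g\cap\vM_2$ together with the fact that $\vU_2$-components acting on $e$ produce terms of $\lambda_f$-degree below the relevant range unless trivial.

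\textbf{Main obstacle.} The hard step is verifying the hypotheses of Theorem~A.2.1 precisely. These are the good-range positivity from \eqref{eq:good-range-condition}, the genericity \eqref{eq:generic-N}, and the regularity of $e$ with respect to $\lambda_f$ inside $\vR$. I will also need to check that the Bruhat-interval bookkeeping is compatible with $\vR$ being only the identity component of $Z_{\vG}(t)$. To handle this, I would first treat the case where $\lambda_f$ is integral, so $\vR=\vG$, and then observe that every object involved lies in $\vR$, so the argument transfers verbatim.
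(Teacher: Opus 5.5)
You reduce to \cite[Theorem~A.2.1]{AIMV26}, as the paper does. However, the hypotheses you plan to verify are not the ones that make that theorem apply, one of them is false, and the decisive input is missing.

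The genericity \eqref{eq:generic-N} concerns the Levi $L_N\subset G$ used for coherent continuation and plays no role here. Good range of $\lambda_g$ for $N\gg0$ says nothing about $\lambda_f$. Regularity of the Arthur covector in the ABV sense does not make $e$ Richardson in $\vfu_f$, and that is false in general: for tempered $\psi$ with regular integral $\lambda_f$ one has $e=0$, while $\vfu_f$ is the nilradical of a Borel subalgebra. Also $[\lambda_f,e]=e$, so $e\in\vfg_1(\lambda_f)$, not $\vfg_2(\lambda_f)$.

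Writing $\lambda_a=\tfrac12h+\zeta_a$, what actually has to be checked is the following.
\begin{enumerate}
\item $\vQ=\vQ_2\cap\vR=\vL\vU$ is a parabolic of $\vR$ whose Levi algebra ${}^\vee\mathfrak l$ contains the Arthur triple.
\item $\operatorname{Im}\zeta_a$ is central in $\vfr$. Hence $\lambda_a$ may be replaced by $\lambda_a^{\RR}=\tfrac12h+\operatorname{Re}\zeta_a$, integral on the roots of $\vR$, without changing $\vP_a$.
\item $\zeta_f$ is central in $\vfm_2$ because $\psi_2$ is unipotent, and $\zeta_g-\zeta_f$ is central. So $h$ grades ${}^\vee\mathfrak l$ evenly.
\item $\alpha(\operatorname{Re}\zeta_f)>0$ for every root $\alpha$ of $\vU$ (the weakly fair condition).
\item $\lambda_g^{\RR}$ is strictly positive on $\vU$.
\end{enumerate}

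The weakly fair condition is the missing idea. It comes from the definition of $\vfu_2$ as the positive $S^1$-weight space of $\psi|_{\CC^\times}$. On a root line, the holomorphic exponent $A=\alpha(\zeta_f)$ is real, and single-valuedness and boundedness force the antiholomorphic exponent to be $-A$. So the $S^1$-weight is $2\alpha(\operatorname{Re}\zeta_f)$.

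This condition does not follow from anything you list, and the lemma needs it. For $N\gg0$ one has $\Lie(\vP_g)=(\Lie(\vP_f)\cap{}^\vee\mathfrak l)\oplus\Lie(\vU)$. So for $e=0$ the lemma says $\vP_g\subseteq\vP_f$, i.e.\ $\operatorname{Re}\alpha(\lambda_f)\ge0$ on $\vU$, which is a condition on $\lambda_f$ alone.

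Your identity-cell step is also not sound as written. For $e\neq0$, the condition does not hold on all of $\vP_f\vP_g$: roots of $\vU$ with negative $h$-weight can have $\operatorname{Re}\alpha(\lambda_f)<0$, so $\vU\not\subseteq\vP_f$. Showing that $u\in\vU$ with $\operatorname{Ad}(u)e\in\vfu_f$ lies in $\vP_f$ requires the weakly fair inequality together with injectivity of $\operatorname{ad}e$ on negative $h$-weight spaces. Your remark about $\vU_2$-components producing low-degree terms does not supply this.

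In the paper there is no separate cell-by-cell argument. Once the hypotheses above are verified for $\vR$, $\vQ$, $(e,f_A,h)$, $\lambda_f^{\RR}$ and $\lambda_g^{\RR}$, the conclusion of Theorem~A.2.1 is exactly the asserted identity.
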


\begin{proof}
Let $\vQ_2=\vM_2\vU_2\subset\vM_1$ be the dual Jordan parabolic and put
$\vQ=\vQ_2\cap\vR$. By \cite[Lemma~4.3.1]{AIMV26}, the pseudo-Levi of
$\vM_1$ determined by $\lambda_f$ is $\vR$, and
\cite[Lemma~4.4.1(i)]{AIMV26} shows that $\vQ$ is a parabolic subgroup of
$\vR$ with Levi factor
$\vL:=Z_{\vM_2}(\exp(2\pi i\lambda_f))^\circ$; write $\vQ=\vL\vU$.

For $a\in\{f,g\}$, write $\lambda_a=\tfrac12h+\zeta_a$, and put
$\zeta_a^{\mathbb R}=\operatorname{Re}\zeta_a$ and
$\lambda_a^{\mathbb R}=\tfrac12h+\zeta_a^{\mathbb R}$. The Arthur triple $(e,f_A,h)$ lies in $\vfm_2$. Since
$\lambda_f=\tfrac12h+\zeta_f$ and $\zeta_f$ centralizes the Arthur
$\mathfrak{sl}_2$, its $\operatorname{ad}(\lambda_f)$-weights are
$1,-1,0$. Thus the triple is fixed by $\exp(2\pi i\lambda_f)$ and lies in
$\vfr$, so $(e,f_A,h)\subset\vfr\cap\vfm_2={}^\vee\mathfrak l$. After conjugation by
$\vL$, choose $\vT\subset\vB\subset\vR$ so that $\vQ$ is standard,
$h\in\Lie(\vT)$, and $h$ is dominant for the positive roots of $\vL$.

Replacing $\lambda_a$ by $\lambda_a^{\mathbb R}$ does not change the
grading on $\vfr$. Indeed, for every root $\alpha$ of $\vR$, the definition
of $\vR$ gives $\exp(2\pi i\alpha(\lambda_a))=1$, hence
$\alpha(\lambda_a)\in\ZZ$. Since $\alpha(h)$ is real,
$\alpha(\operatorname{Im}\zeta_a)=0$. Thus
$\operatorname{Im}\zeta_a$ is central in $\vfr$. Hence
$\vfr_n(\lambda_a)=\vfr_n(\lambda_a^{\mathbb R})$ for every $n\in\ZZ$.
Since the stabilizer
$\vP_a=\Stab_{\vR}(\Lambda_a)$ has Lie algebra
$\Lie(\vP_a)=\bigoplus_{n\geq0}\vfr_n(\lambda_a)$, $\vP_a$ is also
the grading parabolic attached to $\lambda_a^{\mathbb R}$.

We verify the hypotheses of \cite[Theorem~A.2.1]{AIMV26}. First, the
preceding argument shows that $\lambda_f^{\mathbb R}$ and
$\lambda_g^{\mathbb R}$ take integral values on every root of $\vR$.
Second, Definition~4.1.1(4) of \cite{AIMV26} identifies $\psi_2$ as a
unipotent Arthur parameter for $\vM_2^\Gamma$, while
\cite[Definition~2.3.3]{AIMV26} says that
$\psi_2(\CC^\times)\subset Z(\vM_2)$. Differentiating gives
$\zeta_f\in Z(\vfm_2)$. By the positive
translation formula \cite[(4.6.1)]{AIMV26}, $\zeta_g-\zeta_f$ is the
differential of a central cocharacter of $\vM_2$. Hence
$\zeta_f^{\mathbb R}$ and
$\zeta_g^{\mathbb R}$ are central on $\vL$, and
$\alpha(h)=2\alpha(\lambda_f^{\mathbb R})\in2\ZZ$ for every root $\alpha$
of $\vL$. The grading defined by $h$ on ${}^\vee\mathfrak l$ is therefore even.

For the weakly fair condition, fix $\alpha\in\Delta(\vU,\vT)$ and
$0\neq X_\alpha\in\vfr_\alpha$. Let $\zeta_f^{0,1}$ denote the
antiholomorphic part of the differential of
$\psi|_{\CC^\times}$. On $\CC X_\alpha$, this restriction has exponents
$A=\alpha(\zeta_f)$ and
$B=\alpha(\zeta_f^{0,1})$. Single-valuedness gives $A-B\in\ZZ$, and
boundedness gives $\operatorname{Re}(A+B)=0$. The preceding argument gives
$A\in\RR$, whence $B\in\RR$ and $B=-A$. The $S^1$-weight of the root line is
therefore $m_\alpha=A-B=2A=2\alpha(\zeta_f^{\mathbb R})$. By definition,
$\vfu_2$ is the sum of the positive $S^1$-weight spaces.
Since $\vfu\subset\vfu_2$, we have $m_\alpha>0$, and hence
$\alpha(\zeta_f^{\mathbb R})>0$.

Finally, $N$ was chosen so that $\operatorname{Re}\alpha(\lambda_g)>0$ for
every root $\alpha$ of $\vfu_2$. Since $\vfu\subset\vfu_2$ and
$\lambda_g^{\mathbb R}$ gives the same grading on $\vfr$, every
$\lambda_g^{\mathbb R}$-weight on $\vU$ is strictly positive. All the
hypotheses of \cite[Theorem~A.2.1]{AIMV26} now hold for $\vR$, the
parabolic $\vQ=\vL\vU$, the Arthur triple $(e,f_A,h)$, and the grading
elements $\lambda_f^{\mathbb R},\lambda_g^{\mathbb R}$. Its conclusion is
the asserted identity.
\end{proof}

\section{Microsupport preliminaries}

\subsection{Cotangent conventions}\label{sec:cotangent-conventions}

Kashiwara--Schapira microsupport is a subset of the real cotangent bundle
of the underlying real analytic manifold. For a complex manifold $X$, we
use the convention of \cite[Section~8.5.1]{KS85}, identifying
$T_x^{*(1,0)}X$ with $T^*_{x,\RR}X$ by $\xi\mapsto2\operatorname{Re}\xi$.
The factor $2$ makes the canonical real one-form twice the real part of the
holomorphic canonical one-form. By \cite[Theorem~8.5.2]{KS85}, the
microsupport of a complex constructible sheaf is invariant under the
natural $\CC^\times$-action.
Throughout, $\SSupp$ denotes this Kashiwara--Schapira microsupport,
expressed in holomorphic cotangent coordinates through this
identification; we also call it singular support.
For equivariant perverse sheaves, the ABV characteristic-cycle
multiplicities are used at regular conormal points through their
identification with microstalk ranks
\cite[Theorem~24.8(a)]{ABV92}; at the Arthur covectors this gives
the detection criterion \eqref{eq:packet-criterion}.

\subsection{Canonical relations}

Throughout, $\operatorname{Supp}(F)$ denotes the closed support of a
complex $F$. Thus, for a locally closed immersion
$j:U\hookrightarrow X$, one has
$\operatorname{Supp}(j_!F)\subseteq
\overline{j(\operatorname{Supp}(F))}$.

Let $a:Y\to X$ be a $C^1$ map of real analytic manifolds. At $y\in Y$, the
transpose of $da_y:T_yY\to T_{a(y)}X$ is
${}^tda_y:T^*_{a(y)}X\to T^*_yY$, characterized by
$({}^tda_y\xi)(v)=\xi(da_y(v))$ for $v\in T_yY$. On
$Y\times_XT^*X$, define $a_d(y,\xi)=(y,{}^tda_y\xi)\in T^*Y$ and
$a_\pi(y,\xi)=(a(y),\xi)\in T^*X$.
We use the following results of Kashiwara--Schapira.

\begin{proposition}\label{prop:ks}
Let $F\in D^b_c(Y)$ and $G\in D^b_c(X)$.
\begin{enumerate}[label=\textup{(\roman*)}]
  \item If $a$ is proper on $\operatorname{Supp}(F)$, then
  \begin{equation}\label{eq:proper-ss}
    \SSupp(Ra_!F)
    \subseteq
    a_\pi\bigl(a_d^{-1}\SSupp(F)\bigr).
  \end{equation}
  If $a$ is a closed immersion, equality holds.
  \item If $a$ is a smooth submersion, then
  \begin{equation}\label{eq:smooth-ss}
    \SSupp(a^{-1}G)
    =a_d\bigl(a_\pi^{-1}\SSupp(G)\bigr).
  \end{equation}
\end{enumerate}
\end{proposition}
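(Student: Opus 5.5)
The plan is to derive both parts from the real microlocal theory of \cite[Chapter~V]{KS85}, and then check that the estimates are compatible with the holomorphic-coordinate convention of Section~\ref{sec:cotangent-conventions}.

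For (i), I would first reduce from $Ra_!$ to $Ra_*$. If $a$ is proper on $\operatorname{Supp}(F)$, the natural morphism $Ra_!F\to Ra_*F$ is an isomorphism, since $!$-pushforward and $*$-pushforward agree on complexes whose support maps properly. The inclusion \eqref{eq:proper-ss} is then the direct-image estimate \cite[Proposition~5.4.4]{KS85} applied to $Ra_*F$.

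For the closed-immersion equality, I would argue with the test-function characterization of microsupport. A point $(x_0,\xi_0)$ lies outside $\SSupp(G)$ when $(R\Gamma_{\{\varphi\geq0\}}G)_{x}=0$ for all $C^1$ functions $\varphi$ with $d\varphi(x)$ near $\xi_0$ and $x$ near $x_0$. For $G=a_*F$ with $a$ a closed embedding, base change gives
\[
R\Gamma_{\{\varphi\geq0\}}(a_*F)\simeq a_*R\Gamma_{\{\varphi\circ a\geq0\}}F,
\qquad
d(\varphi\circ a)(y)={}^tda_y\,d\varphi(a(y)).
\]
This gives the inclusion again. For the reverse inclusion, I would use that any $C^1$ test function $\psi$ on $Y$ near $y$ extends to a test function $\varphi$ on $X$ with $\varphi\circ a=\psi$ and with $d\varphi(a(y))$ any prescribed lift of $d\psi(y)$. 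In local coordinates $X\simeq Y\times\RR^k$, one takes $\varphi(y',t)=\psi(y')+\langle\eta,t\rangle$. Consequently every covector in $a_\pi(a_d^{-1}\SSupp(F))$ is detected on $a_*F$. Alternatively, one can quote the equality case stated after \cite[Proposition~5.4.4]{KS85}.

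For (ii), the equality is \cite[Proposition~5.4.5]{KS85}, which holds for submersions. Its proof reduces, in local coordinates $Y\simeq X\times\RR^m$, to the product situation $a^{-1}G\simeq G\boxtimes\CC_{\RR^m}$. There one uses $\SSupp(G\boxtimes\CC)=\SSupp(G)\times T^*_{\RR^m}\RR^m$, which is \cite[Proposition~5.4.1]{KS85}. The zero section in the fiber directions is exactly the image $a_d(a_\pi^{-1}\SSupp(G))$.

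The step I expect to need the most care is compatibility with our holomorphic conventions, because the paper applies these results to holomorphic maps of complex manifolds with $\SSupp$ written in $T^{*(1,0)}$-coordinates. For holomorphic $a$, the real differential is complex linear. The identification $\xi\mapsto2\operatorname{Re}\xi$ of \cite[Section~8.5.1]{KS85} then intertwines the holomorphic transpose with the real transpose, since $2\operatorname{Re}(\xi\circ da)=(2\operatorname{Re}\xi)\circ da$. Hence the maps $a_d$ and $a_\pi$ correspond under this identification, and \eqref{eq:proper-ss} and \eqref{eq:smooth-ss} hold verbatim in holomorphic coordinates. The remaining subtlety is regularity: the statement allows $C^1$ maps, and the Kashiwara--Schapira results are stated in that generality. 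Since we only apply them to algebraic maps, there is no loss in assuming real analyticity if needed.
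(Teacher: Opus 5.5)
Your proposal is correct and follows the paper's route. You identify $Ra_!F$ with $Ra_*F$ using properness on the support, then invoke the Kashiwara--Schapira proper direct-image estimate (with equality for closed embeddings) and the submersion inverse-image equality; your test-function sketch for closed embeddings, the local-product sketch for (ii), and the holomorphic-convention check are extra detail the paper omits. The one discrepancy is bibliographic: the paper quotes these results as \cite[Proposition~4.1.1(i),(ii) and Proposition~4.1.2(i)]{KS85}, whereas your Chapter~V numbering (Propositions 5.4.1, 5.4.4, 5.4.5) appears to be that of the later book \emph{Sheaves on Manifolds}, so you should align it with the source actually cited.
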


\begin{proof}
For \textup{(i)}, properness on $\operatorname{Supp}(F)$ identifies
$Ra_!F$ with $Ra_*F$, so the assertion, including equality for a closed
immersion, is \cite[Proposition~4.1.1(i),(ii)]{KS85}.  Assertion
\textup{(ii)} is \cite[Proposition~4.1.2(i)]{KS85}.
When $a$ is a submersion, $da_y$ is surjective and hence its transpose
${}^tda_y$, and therefore $a_d$, is injective.
\end{proof}

For an open immersion $j:V\hookrightarrow X$, \eqref{eq:smooth-ss} gives the locality formula
\begin{equation}\label{eq:locality}
  \SSupp(j^{-1}F)=\SSupp(F)\cap T^*V.
\end{equation}

\subsection{Perverse d\'evissage at a regular conormal point}

Let $H$ be a complex pro-algebraic group whose action on the smooth
complex variety $X$ factors through a complex algebraic quotient
$H\twoheadrightarrow H_0$, and assume that there are finitely many
orbits. In our applications, $(H,H_0)$ is
$(\vG^{\alg},\vG)$ or $(\vK^{\alg},\vK)$.
The $H$- and $H_0$-orbits coincide and form a Whitney stratification,
taking connected components as strata when needed
\cite[Section~2.3, p.~3582]{FR18}.
All equivariant coefficient systems and stabilizer actions below
are retained for $H$.
Fix a regular conormal datum $\nu=(x,\xi)$ over an $H$-orbit $S$ of
complex dimension $s$. Choose the normal slice and Morse pair
$J_\nu\supset K_\nu$ of \cite[(24.10)(a)]{ABV92}. For every
$C\in D^b_{c,H}(X)$, define the normalized Morse complex by
\begin{equation}\label{eq:normal-morse}
 \mu_\nu(C):=R\Gamma(J_\nu,K_\nu;C)[-s]\in D^b(\mathrm{Vect}_{\CC}).
\end{equation}
Here $R\Gamma(J_\nu,K_\nu;C)$ is the relative hypercohomology complex
of $C$ on the Morse pair; explicitly, it is
$\operatorname{Cone}(R\Gamma(J_\nu;C)\to R\Gamma(K_\nu;C))[-1]$.
This is the construction of \cite[Definition~24.11]{ABV92} for bounded
constructible complexes. As $\nu$ varies in a connected component of the
regular conormal locus, the groups $H^i\mu_\nu(C)$ form a local system. In
particular, different choices of normal slices and Morse pairs give locally
canonically isomorphic Morse groups. For a perverse sheaf $P$,
\cite[(24.10)(b),(c) and
Theorem~24.8(c)]{ABV92} gives
\begin{equation}\label{eq:morse-perverse}
 H^r\mu_\nu(P)=0\quad(r\neq0),
 \qquad
 H^0\mu_\nu(P)\simeq\For\chi_\nu^{\mic}(P).
\end{equation}
Here $H_\nu=\operatorname{Stab}_H(x,\xi)$ and
$A_\nu^{\mathrm{stab}}=H_\nu/(H_\nu)_0$ is the pointwise stabilizer
component group, where $(H_\nu)_0$ is the identity component of $H_\nu$. For every
regular $\nu$, $\chi_\nu^{\mic}(P)$ denotes the fiber at $\nu$ of the
$H$-equivariant local system $Q^{\mic}(P)$ of
\cite[Theorem~24.8]{ABV92}. Its isotropy action factors through
$A_\nu^{\mathrm{stab}}$ by \cite[Lemma~7.3(d)]{ABV92}, and
$\For:\Rep(A_\nu^{\mathrm{stab}})\to\mathrm{Vect}_{\CC}$ is the
forgetful functor for finite-dimensional continuous complex
representations.

To identify this pointwise group with the ABV equivariant
micro-fundamental group, choose an $H_x$-invariant nonempty open subset
$U_{S,x}\subset (T^*_{S,x}X)_{\mathrm{reg}}$ as in
\cite[Lemma~24.3(f) and Definition~24.7]{ABV92} and require
$\xi\in U_{S,x}$. Then $A_\nu^{\mathrm{stab}}=A_\nu^{\mic}$ is a
realization of the generic group $A_S^{\mic}$.
The concentration in \eqref{eq:morse-perverse} and exactness of this
fiber functor, hence Lemma~\ref{lem:devissage} below, hold at every
regular point.
The Arthur covectors $\nu_a$ already lie in the open conormal orbits
\cite[Proposition~3.7.1]{AIMV26}, so they belong to these invariant
generic open sets. Their fiber representations agree with
$\chi_{\psi_a}^{\mic}$ under the Arthur-component-group identification
of \cite[Proposition~22.9(e) and Definition~24.15]{ABV92}.
If $\nu=(x,0)$ lies
over an open orbit, the normal slice is a point and
\eqref{eq:normal-morse} is the shifted stalk $i_x^{-1}C[-s]$.

\begin{lemma}\label{lem:devissage}
For every $C\in D^b_{c,H}(X)$ and every $j\in\ZZ$, there is a natural isomorphism
\begin{equation}\label{eq:devissage}
 H^j\mu_\nu(C)
 \simeq
 \For\chi_\nu^{\mic}({}^p\mathcal H^j(C)).
\end{equation}
\end{lemma}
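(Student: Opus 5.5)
The natural approach is to compare the Morse complex $\mu_\nu$ with the perverse cohomology spectral sequence. Since $\mu_\nu$ is defined on all of $D^b_{c,H}(X)$ as a relative hypercohomology and is a triangulated functor, I would first check that it is cohomological on perverse sheaves. By \eqref{eq:morse-perverse}, $\mu_\nu(P)$ is concentrated in degree $0$ for every $P$ in the perverse heart. Hence the composite $P\mapsto H^0\mu_\nu(P)$ is an exact functor on the heart. It agrees with $\For\chi_\nu^{\mic}$, and this agreement is functorial, because both come from the ABV construction \cite[Theorem~24.8]{ABV92} and $\For$ is exact.

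The key general fact I would then invoke is the following. Let $F$ be a triangulated functor from a triangulated category with a bounded t-structure to $D^b(\mathrm{Vect}_\CC)$ that sends the heart into objects concentrated in degree $0$. Then $F$ is t-exact, so $H^j F(C)\simeq H^0F({}^p\mathcal H^j(C))$ naturally. The steps run as follows.

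(i) Show that $H^j\mu_\nu$ vanishes for $j<0$ on ${}^pD^{\geq0}$ and for $j>0$ on ${}^pD^{\leq0}$. This goes by induction on the perverse amplitude, using the truncation triangles ${}^p\tau_{\leq k}C\to C\to{}^p\tau_{>k}C\to$ and the long exact cohomology sequence of $\mu_\nu$ applied to them. Each step peels off a shifted heart object ${}^p\mathcal H^k(C)[-k]$, whose $\mu_\nu$ is concentrated in degree $k$. The constructible boundedness of $C$ makes the induction finite.

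(ii) Deduce that $\mu_\nu$ commutes with truncation, $\mu_\nu({}^p\tau_{\leq j}C)\simeq\tau_{\leq j}\mu_\nu(C)$, and similarly for $\tau_{\geq j}$. Hence $H^j\mu_\nu(C)\simeq H^j\mu_\nu({}^p\mathcal H^j(C)[-j])=H^0\mu_\nu({}^p\mathcal H^j(C))$.

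(iii) Combine with \eqref{eq:morse-perverse} to get \eqref{eq:devissage}. Naturality in $C$ follows because the truncation functors and the identifications in (ii) are functorial.

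I expect the main obstacle to be the foundational input rather than the homological algebra. First, $\mu_\nu$ must be a well-defined triangulated functor on the equivariant derived category $D^b_{c,H}(X)$, not merely on sheaves on $X$. This holds because $\operatorname{Cone}(R\Gamma(J_\nu;-)\to R\Gamma(K_\nu;-))[-1]$ is built from restriction and derived sections after applying the forgetful functor to $D^b_c(X)$. The second issue is that the identification \eqref{eq:morse-perverse} must be natural in $P$, so that the isomorphism in (iii) is natural and not just an abstract isomorphism of vector spaces. For that I would rely on the fact that ABV define $Q^{\mic}(P)$ exactly via these Morse groups \cite[(24.10), Definition~24.11]{ABV92}, so the isomorphism is the identity on the underlying construction. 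The regularity of $\nu$ (in particular $\xi\in U_{S,x}$, or merely a regular point as remarked above) is needed only to ensure the concentration statement in \eqref{eq:morse-perverse}, which the paper already grants at every regular point.
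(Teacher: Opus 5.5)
Your argument is correct and is essentially the paper's proof. The paper applies $\mu_\nu$ to the finite perverse tower of $C$ and notes that the spectral sequence $E_2^{r,s}=H^r\mu_\nu({}^p\mathcal H^s(C))\Rightarrow H^{r+s}\mu_\nu(C)$ lives in the single row $r=0$ by \eqref{eq:morse-perverse}, so it degenerates and the edge map gives the natural isomorphism; your induction on perverse amplitude proving t-exactness of $\mu_\nu$ unpacks that same degeneration from the same truncation triangles and the same concentration input.
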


\begin{proof}
Apply the triangulated functor $\mu_\nu$ to the finite perverse tower of
$C$. The associated spectral sequence is
$E_2^{r,s}=H^r\mu_\nu({}^p\mathcal H^s(C))\Longrightarrow
H^{r+s}\mu_\nu(C)$. By \eqref{eq:morse-perverse}, only the row $r=0$ can
be nonzero. The spectral sequence therefore degenerates, and for each total
degree $j$ the induced filtration has the single nonzero graded piece
$H^0\mu_\nu({}^p\mathcal H^j(C))\simeq
\For\chi_\nu^{\mic}({}^p\mathcal H^j(C))$.
The edge morphism gives the natural isomorphism \eqref{eq:devissage}.
\end{proof}

\begin{lemma}\label{lem:morse-detects-ss}
For $C\in\mathcal D_g$, if $\mu_{\nu_g}(C)\neq0$, then
$\nu_g\in\SSupp(C)$.
\end{lemma}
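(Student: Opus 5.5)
We argue by contraposition: assuming $\nu_g\notin\SSupp(C)$, we show that $\mu_{\nu_g}(C)=0$. The plan is to use the standard characterization of Kashiwara--Schapira microsupport through vanishing of local Morse data. Recall that $\nu_g=(x_g,e)$ is a regular conormal covector over the orbit $S$ through $x_g$, of dimension $s$. By \eqref{eq:normal-morse}, $\mu_{\nu_g}(C)$ is, up to shift, the relative hypercohomology $R\Gamma(J_\nu,K_\nu;C)$ on the Morse pair of \cite[(24.10)(a)]{ABV92}. This pair is built from a normal slice $N$ through $x_g$ and a real function $\phi$ on $N$ with $d\phi(x_g)=2\operatorname{Re}\xi$, in the convention of Section~\ref{sec:cotangent-conventions}. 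Explicitly, $J_\nu=N\cap B_\epsilon(x_g)\cap\{\phi\le\delta\}$ and $K_\nu=N\cap B_\epsilon(x_g)\cap\{\phi\le-\delta\}$ for $0<\delta\ll\epsilon\ll1$.

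\textbf{Step 1: reduce to the slice.} Microsupport is closed and conic, so $\nu_g\notin\SSupp(C)$ gives an open conic neighborhood $W$ of $\nu_g$ with $W\cap\SSupp(C)=\emptyset$. The slice $N$ meets $S$ transversally at $x_g$, hence the restriction to a small neighborhood of $x_g$ in $N$ is noncharacteristic for $\SSupp(C)$ near $\nu_g$. The noncharacteristic restriction estimate \cite[Proposition~5.4.13]{KS85} then gives that $\SSupp(C|_N)$ is contained in the image of $\SSupp(C)$ under restriction of covectors. The covector $\xi|_{T_{x_g}N}$ has preimage in $T^*X$ equal to $\xi+T^*_{S}X$, and the points of this preimage in $\SSupp(C)$ near $x_g$ must be excluded. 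Here regularity of $\nu_g$ is used. Near $x_g$, $\SSupp(C)$ is contained in the union of conormals to the finitely many $\vG^{\alg}$-orbit strata, a Whitney stratification, and $\nu_g$ lies in the open conormal orbit, so it avoids the closures of the other conormals. Combining these facts, I would conclude that $(x_g,\xi|_N)\notin\SSupp(C|_N)$. The same holds for nearby covectors.

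\textbf{Step 2: Morse vanishing.} On $N$, since $(x_g,d\phi(x_g))\notin\SSupp(C|_N)$, the microlocal Morse lemma \cite[Corollary~5.4.19 / Proposition~5.1.1]{KS85} shows that $R\Gamma_{\{\phi\ge0\}}(C|_N)_{x_g}=0$. Via the standard identification of the local Morse group with the cone of $R\Gamma(J_\nu;C)\to R\Gamma(K_\nu;C)$ for small $\epsilon,\delta$, this gives $R\Gamma(J_\nu,K_\nu;C)=0$, and hence $\mu_{\nu_g}(C)=0$. This is where constructibility is needed: for constructible complexes, the conic-structure theorem makes the relative cohomology independent of the choice of small $\epsilon$ and $\delta$.

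\textbf{Main obstacle.} The delicate point is Step 1: the passage from ``regular conormal'' to ``noncharacteristic for the slice,'' together with matching the ABV Morse pair to the KS local Morse group. I would rely on the fact that $C$ is equivariant, so $\SSupp(C)$ lies in the closed Lagrangian union of conormals to orbits. The regularity of $\nu_g$, that is, not lying in $\overline{T^*_{S'}X}$ for any $S'\neq S$, is exactly the genericity used in \cite[Section~24]{ABV92}. If a direct argument is awkward, an alternative is to invoke \cite[Proposition~24.10]{ABV92} or the Goresky--MacPherson description of Morse data, which is stratified Morse theory. That description shows the Morse group at a nondegenerate covector depends only on the microlocal germ of $C$ at that covector, and so it vanishes off $\SSupp(C)$.
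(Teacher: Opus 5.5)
Your strategy works and differs from the paper's, but Step~1 is wrong as written and needs a different justification.

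\textbf{Comparison of routes.} The paper never restricts $C$ to the slice. It extends the slice function to an ambient function $f_g$ with $df_g|_{x_g}=2\operatorname{Re}(e)$ by adding a tangential quadratic form. The Goresky--MacPherson product theorem then identifies $(R\Gamma_{\{f_g\ge0\}}C)_{x_g}$ with the normal Morse group shifted by the tangential index $\ell$. The paper then applies the definition of microsupport in $X$ itself, and treats $e=0$ separately via openness of the orbit. You instead move the problem to $N$ by noncharacteristic restriction and apply the definition of microsupport there. The Morse comparison you need is then only the $\ell=0$ case of \eqref{eq:local-normal-morse}, at a point stratum of $N$. This avoids the product theorem, at the cost of the noncharacteristic inverse-image estimate. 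Regularity of $\nu_g$ is needed in your Step~2, not Step~1. There it makes $x_g$ an isolated stratified critical point of $\phi$ on $N$. That is what makes the Morse pair well defined and makes its relative cohomology compute $(R\Gamma_{\{\phi\ge0\}}C|_N)_{x_g}$.

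\textbf{The error in Step~1.} The fiber of the restriction map $T^*_{x_g}X\to T^*_{x_g}N$ over $\xi|_{T_{x_g}N}$ is $\xi+T^*_{N,x_g}X$, where $T^*_{N,x_g}X=(T_{x_g}N)^\perp$ is the conormal space of the slice. It is not $\xi+T^*_SX$. Since $\xi\in T^*_{S,x_g}X$, the set you wrote is the whole conormal fiber $T^*_{S,x_g}X$. That set contains $(x_g,0)$, which lies in $\SSupp(C)$ whenever $x_g\in\operatorname{Supp}(C)$, so the exclusion you claim is false as stated.

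Your proposed reason also does not fit. Regularity of $\nu_g$ only says that $\xi$ itself avoids $\overline{T^*_{S'}X}$ for $S'\neq S$; it says nothing about the other points of the fiber.

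\textbf{The fix.} Use Whitney's condition~(a). The union of the conormal bundles of the orbit strata is closed, so $\SSupp(C)_{x_g}\subset T^*_{S,x_g}X$. Transversality of $N$ to $S$ gives $T^*_{S,x_g}X\cap T^*_{N,x_g}X=(T_{x_g}S+T_{x_g}N)^\perp=0$. Two consequences follow:
\begin{enumerate}
\item $N$ is noncharacteristic at $x_g$, and at nearby points, since $N$ stays transverse to the adjacent strata by condition~(a) again.
\item Restriction to $T_{x_g}N$ is injective on $T^*_{S,x_g}X$.
\end{enumerate}
Hence any $\eta\in\SSupp(C)_{x_g}$ with $\eta|_{T_{x_g}N}=\xi|_{T_{x_g}N}$ must equal $\xi$, which your hypothesis excludes. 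With this correction Step~1 holds without using regularity at all, and the rest of your argument goes through.
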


\begin{proof}
Suppose first that $e=0$, and let $S_g$ be the orbit of $x_g$. If
$S_g$ were not open, finiteness of the orbit decomposition would provide
an orbit $S'\neq S_g$ with $x_g\in\overline{S'}$. The closure is
$H$-stable, so $S_g\subset\overline{S'}$. Since the zero
section over $S'$ is contained in $T^*_{S'}X$, this would imply
\[
 (x_g,0)\in\overline{T^*_{S'}X},
\]
contradicting the regularity of $\nu_g=(x_g,0)$ in the sense of
\cite[(24.1)]{ABV92}. Hence $S_g$ is open and
$\mu_{\nu_g}(C)=i_{x_g}^{-1}C[-s]\neq0$. Thus
$x_g\in\operatorname{Supp}(C)$, equivalently
$(x_g,0)\in\SSupp(C)$.

Now suppose that $e\neq0$. Choose the adapted real function $f_g$ used
to construct $J_{\nu_g}\supset K_{\nu_g}$ in
\cite[(24.10)(a)]{ABV92}, with $f_g(x_g)=0$ and
$df_g|_{x_g}=2\operatorname{Re}(e)$, as prescribed by
Section~\ref{sec:cotangent-conventions}. We may choose its extension off
the normal slice so that $f_g|_{S_g}$ has a nondegenerate critical point
at $x_g$. Indeed, in local product coordinates, add a sufficiently small
generic real quadratic form $q$ in the tangential variables, chosen so
that the Hessian of $(f_g+q)|_{S_g}$ at $x_g$ is nondegenerate. The form
$q$ vanishes on the normal slice and has zero value and differential at
$x_g$; replacing $f_g$ by $f_g+q$ therefore changes neither
$df_g|_{x_g}$ nor the normal Morse pair $(J_{\nu_g},K_{\nu_g})$, thus we still
use the notation $f_g$.

Choose a control-data product neighborhood $U\simeq B\times N$ of $x_g$,
where $B\subset S_g$ is a contractible ball and $N$ is the normal slice.
Let $(M^+_{\mathrm{tan}},M^-_{\mathrm{tan}})$ be the Morse pair of
$f_g|_{S_g}$ at $x_g$. After shrinking $U$, the added quadratic form
depends only on $B$, and the normal pair on $N$ remains
$(J_{\nu_g},K_{\nu_g})$. The stratified Morse product theorem
\cite[Theorem~3.7, pp.~65--66, and \S6.A.1, pp.~222--223]{GM88}
identifies the local Morse pair with
\[
 \bigl(M^+_{\mathrm{tan}}\times J_{\nu_g},
 (M^-_{\mathrm{tan}}\times J_{\nu_g})
 \cup(M^+_{\mathrm{tan}}\times K_{\nu_g})\bigr).
\]
Let $\ell$ be the Morse index of $f_g|_{S_g}$ at $x_g$.
Choose a sufficiently small compact Morse neighborhood
$V=\overline B_\varepsilon(x_g)\subset U$ and $0<\delta\ll\varepsilon$
as in \cite[Lemma~10.5]{Gor21}. Its boundary is transverse to the
ambient and normal-slice strata. Stratify $V$ by the intersections
of the ambient strata with its interior and boundary, choosing the
data so that $x_g$ is the only critical point
of $f_g$ on these strata with critical value in $[-\delta,\delta]$.
Put $V_{\le t}=V\cap\{f_g\le t\}$ and
$V_{<0}=V\cap\{f_g<0\}$.
The local supported-cohomology description
\cite[\S10.8, pp.~45--46]{Gor21} gives the first two isomorphisms
below; local Morse excision and the comparison with constructible
coefficients \cite[Theorems~5.3--5.4, especially part~(iv) of the proof
of Theorem~5.4, pp.~153--157]{Hamm15} give the last. For every $k\in\ZZ$,
\begin{equation}\label{eq:local-normal-morse}
\begin{aligned}
 H^k\!\left(\bigl(R\Gamma_{\{f_g\geq0\}}C\bigr)_{x_g}\right)
 &\simeq H^k(V_{\le\delta},V_{<0};C)\\
 &\simeq H^k(V_{\le\delta},V_{\le-\delta};C)\\
 &\simeq H^{k-\ell}(J_{\nu_g},K_{\nu_g};C).
\end{aligned}
\end{equation}
In the first step, the stalk limit stabilizes for these admissible
small neighborhoods; the second uses stratified isotopy below the
critical value. The normal Morse pair is taken at sufficiently small
choices, whose cohomology is independent of these choices by
\cite[(24.10) and Definition~24.11]{ABV92}.
The coefficients in the last line are the restriction of $C$ to the
normal slice. On the controlled product used for this comparison,
part~(iv) of Hamm's proof of Theorem~5.4 identifies the coefficient complex up to
quasi-isomorphism with the pullback of this restriction.
Hamm uses nonnegative constructible complexes. Since $C$ is
cohomologically bounded, an overall shift permits a representative
vanishing in negative degrees. Relative hypercohomology commutes
with this shift, so shifting back gives
\eqref{eq:local-normal-morse} for $C$.
Since $\mu_{\nu_g}(C)\neq0$, some group in the last line is nonzero.
Hence the local cohomology complex on the left is nonzero, and
\cite[Theorem~3.1.1]{KS85} gives
$(x_g,2\operatorname{Re}(e))\in\SSupp(C)$. Under the cotangent convention
of Section~\ref{sec:cotangent-conventions}, this is precisely
$\nu_g\in\SSupp(C)$.
\end{proof}

\begin{lemma}\label{lem:no-cancellation}
Let $C\in\mathcal D_g$. Suppose a visible simple perverse sheaf
$P_g(\eta)$ has nonzero coefficient in $[C]_{K_0}$. Then
$\nu_g\in\SSupp(C)$.
\end{lemma}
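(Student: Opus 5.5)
We argue by contraposition through the normalized Morse complex: we show that a nonzero coefficient of $P_g(\eta)$ forces $\mu_{\nu_g}(C)\neq0$, and then Lemma~\ref{lem:morse-detects-ss} gives $\nu_g\in\SSupp(C)$. The point is that the Euler characteristic of $\mu_{\nu_g}(C)$ is a linear functional on $K_0(\mathcal P_g)$, and it is nonnegative on every simple object. So cancellation between signed perverse cohomology classes can only occur among sheaves invisible at $\nu_g$.

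\textbf{Step 1: an additive functional.} By Lemma~\ref{lem:devissage}, for every $j$ we have
\[
 \dim H^j\mu_{\nu_g}(C)=\dim\chi_{\nu_g}^{\mic}\bigl({}^p\mathcal H^j(C)\bigr).
\]
The microstalk functor $\chi_{\nu_g}^{\mic}$ is exact on $\mathcal P_g$ (this is \cite[Theorem~24.8(c)]{ABV92}, recalled above; it holds at every regular point, and $\nu_g$ is one). Hence $m(Q):=\dim\chi^{\mic}_{\nu_g}(Q)$ is additive in Jordan--H\"older series, and
\[
 \dim H^j\mu_{\nu_g}(C)=\sum_\xi[{}^p\mathcal H^j(C):P_g(\xi)]\,m(P_g(\xi)).
\]

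\textbf{Step 2: the sign problem.} Summing with signs gives the Euler characteristic
\[
 \chi\bigl(\mu_{\nu_g}(C)\bigr)=\sum_\xi c_\xi\, m(P_g(\xi)),
\]
where $c_\xi$ is the coefficient of $[P_g(\xi)]$ in $[C]_{K_0}$. This sum can still cancel among visible summands, so the Euler characteristic alone does not suffice. We therefore avoid it and argue degreewise.

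\textbf{Step 3: the degreewise argument.} Since $c_\eta=\sum_j(-1)^j[{}^p\mathcal H^j(C):P_g(\eta)]\neq0$, some $j$ has $[{}^p\mathcal H^j(C):P_g(\eta)]>0$. For that $j$, every multiplicity and every $m(P_g(\xi))$ is nonnegative, so
\[
 \dim H^j\mu_{\nu_g}(C)\ \geq\ m(P_g(\eta)).
\]
By visibility and \eqref{eq:packet-criterion}, $m(P_g(\eta))>0$. Hence $\mu_{\nu_g}(C)\neq0$, and Lemma~\ref{lem:morse-detects-ss} concludes the proof.

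\textbf{Main obstacle.} The delicate point is the setting of the functor $\chi^{\mic}_{\nu_g}$. It is defined on the component $\mathcal P_g^\psi$ and extended by zero to the other components. We must check two things:
\begin{itemize}
 \item that visibility of $P_g(\eta)$ puts it in the Arthur component;
 \item that Lemma~\ref{lem:devissage} applies to $C\in\mathcal D_g$ on the full block.
\end{itemize}
For the second, the Morse complex is local near $x_g$, so only $j_g^*C$ matters. The perverse truncation commutes with the open and closed restriction $j_g^*$, so we reduce to $\mathcal D_g^\psi$. Through $\operatorname{Fib}_g$ we may then work on $X_g$ with $H=\vK^{\alg}$, where the Arthur microstalk identification of Lemma~\ref{lem:fiber-model} applies.
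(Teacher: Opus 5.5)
Your proposal is correct and follows the paper's proof: Step 3 is exactly its argument, where a nonzero signed coefficient forces $P_g(\eta)$ to be a constituent of some ${}^p\mathcal H^j(C)$. Exactness of $\chi^{\mic}$ and visibility of $P_g(\eta)$ then make $\chi^{\mic}({}^p\mathcal H^j(C))\neq0$, and Lemma~\ref{lem:devissage} together with Lemma~\ref{lem:morse-detects-ss} finishes the proof (your opening claim that the Euler characteristic confines cancellation to invisible sheaves is wrong, as your own Step 2 notes, but Step 3 does not use it).
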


\begin{proof}
Write $[C]_{K_0}=\sum_j(-1)^j[{}^p\mathcal H^j(C)]$.
A nonzero total coefficient of $P_g(\eta)$ implies that it is a
Jordan--H\"older constituent of ${}^p\mathcal H^j(C)$ for at least one $j$.
Exactness of $\chi_{\psi^N}^{\mic}$ gives
\[
 \dim\For\chi_{\psi^N}^{\mic}({}^p\mathcal H^j(C))
 =\sum_Q[{}^p\mathcal H^j(C):Q]\,
   \dim\For\chi_{\psi^N}^{\mic}(Q).
\]
All summands are nonnegative, and the summand indexed by the visible
simple $P_g(\eta)$ is positive. Hence
$\For\chi_{\psi^N}^{\mic}({}^p\mathcal H^j(C))\neq0$. By
Lemma~\ref{lem:devissage}, $\mu_{\nu_g}(C)\neq0$, and
Lemma~\ref{lem:morse-detects-ss} gives $\nu_g\in\SSupp(C)$.
\end{proof}

\section{Transition of singular support}

Fix once and for all a nondegenerate $\vG^\Gamma$-invariant symmetric
complex bilinear form $B$ on $\vfg$, as in
\cite[Proposition~3.7.1]{AIMV26}, and use the same symbol for its
restriction to $\vfr$. This restriction is nondegenerate and
$\vtheta$-invariant. It identifies $\vfr^*$ with $\vfr$; in particular,
for every parabolic $\vfp\subset\vfr$ with nilradical $\vfu$, one has
$\vfp^\perp=\vfu$. Under the convention of
Section~\ref{sec:cotangent-conventions}, twice the real part of $B$
realizes the corresponding real-cotangent identification. All
identifications of Arthur covectors with the element $e$, and all
cotangent-space identifications in Proposition~\ref{prop:propagation},
are made using this same form.

All closures of orbits and double cosets in this section are taken in the
indicated complex algebraic varieties.

\begin{lemma}\label{lem:diagonal-orbit-closure}
Let $\pi_f:\vR\to X_f=\vR/\vP_f$ be the quotient map and
$\Phi:\vR\times^{\vP_g}X_f\to X_g\times X_f$ the isomorphism
$\Phi([s,y])=(s\vP_g,sy)$. Then
\[
 \overline{\vR\cdot(x_g,x_f)}
 =\Phi\bigl(\vR\times^{\vP_g}
        \overline{\vP_g\cdot x_f}\bigr),
 \qquad
 \pi_f^{-1}(\overline{\vP_g\cdot x_f})
 =\overline{\vP_g\vP_f}.
\]
In particular, the fiber of the first closure over $x_g$ is
$\{x_g\}\times\overline{\vP_g\cdot x_f}$.
\end{lemma}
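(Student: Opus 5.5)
The plan is to prove both identities using only the structure of $\Phi$ as an $\vR$-equivariant isomorphism and the fact that $\pi_f$ is a smooth, hence open, surjective morphism.

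\emph{Step 1: $\Phi$ is an isomorphism.} Let $\vR$ act on $\vR\times^{\vP_g}X_f$ by left multiplication on the first factor. Then $\Phi$ is $\vR$-equivariant and is well defined, since $[sp,p^{-1}y]\mapsto(sp\vP_g,sy)$. It is the standard isomorphism $\vR\times^{\vP_g}X_f\simeq \vR/\vP_g\times X_f$, with inverse $(s\vP_g,y)\mapsto[s,s^{-1}y]$. Consequently $\Phi$ carries closed $\vR$-stable subsets onto closed $\vR$-stable subsets.

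\emph{Step 2: the first identity.} Closed $\vR$-stable subsets of $\vR\times^{\vP_g}X_f$ correspond bijectively to closed $\vP_g$-stable subsets $W\subset X_f$, via $W\mapsto\vR\times^{\vP_g}W$. This is because $\vR\times^{\vP_g}X_f\to\vR/\vP_g$ is a locally trivial fibration with fiber $X_f$, and closedness can be checked in local trivializations. Under this correspondence:
\begin{itemize}
\item the orbit $\vR\cdot(x_g,x_f)=\Phi(\vR\times^{\vP_g}(\vP_g\cdot x_f))$;
\item the closure of $\vR\times^{\vP_g}A$ is $\vR\times^{\vP_g}\overline A$ for any $\vP_g$-stable $A$.
\end{itemize}
For the second point, use the quotient map $\vR\times X_f\to\vR\times^{\vP_g}X_f$, which is smooth, surjective and open. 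The preimage of $\vR\times^{\vP_g}A$ is $\vR\times A$, and the closure of $\vR\times A$ is $\vR\times\overline A$. Openness of the quotient map ensures that closure commutes with taking saturated preimages.

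The fiber statement follows at once: the fiber of $\vR\times^{\vP_g}\overline{\vP_g x_f}$ over $1\vP_g$ is $\overline{\vP_g x_f}$, and $\Phi$ maps it to $\{x_g\}\times\overline{\vP_g\cdot x_f}$.

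\emph{Step 3: the second identity.} We have $\pi_f^{-1}(\vP_g\cdot x_f)=\vP_g\vP_f$. Since $\pi_f$ is open and surjective, preimage commutes with closure: $\pi_f^{-1}(\overline A)=\overline{\pi_f^{-1}(A)}$. To see this, note that $\pi_f^{-1}(\overline A)$ is closed and contains $\pi_f^{-1}(A)$. Conversely, if $r\notin\overline{\pi_f^{-1}A}$, take an open neighborhood $U$ of $r$ disjoint from $\pi_f^{-1}A$. Then $\pi_f(U)$ is an open set containing $\pi_f(r)$ and disjoint from $A$, so $\pi_f(r)\notin\overline A$.

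The only point needing care is the Zariski versus analytic closure. The sets involved are constructible orbits or their unions, so their Zariski and analytic closures coincide, and the open-map argument works in either topology. No real obstacle remains.
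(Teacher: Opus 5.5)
Your proposal is correct and follows essentially the paper's route: the first identity comes from the closed associated subbundle $\vR\times^{\vP_g}\overline{\vP_g\cdot x_f}$, in which the orbit subbundle is dense (you justify this density through the open quotient map $\vR\times X_f\to\vR\times^{\vP_g}X_f$), and the second comes from openness of $\pi_f$. For the second identity, your direct argument $\pi_f^{-1}(\overline{A})\subseteq\overline{\pi_f^{-1}(A)}$ is slightly more streamlined than the paper's right-$\vP_f$-saturation step.
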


\begin{proof}
Under $\Phi$, the diagonal orbit corresponds to
$\vR\times^{\vP_g}(\vP_g\cdot x_f)$. Since
$\overline{\vP_g\cdot x_f}$ is a closed $\vP_g$-stable subvariety of
$X_f$, its associated subbundle is closed in
$\vR\times^{\vP_g}X_f$; the orbit subbundle is dense in it. This proves
the first identity and the assertion about its fiber. For the second,
put $A=\vP_g\vP_f=\pi_f^{-1}(\vP_g\cdot x_f)$. For $p\in\vP_f$,
right multiplication by $p$ is a homeomorphism and $Ap=A$; hence
$\overline A\,p=\overline{Ap}=\overline A$. Thus $\overline A$, and
therefore its complement, is right-$\vP_f$-saturated. Since $\pi_f$ is
open, that complement is the inverse image of its open image under
$\pi_f$. The closed set
$\pi_f^{-1}(\overline{\vP_g\cdot x_f})$ contains $A$, so it contains
$\overline A$. Conversely, the open image of the complement of
$\overline A$ is disjoint from $\vP_g\cdot x_f$, hence from its closure.
Taking inverse images and then complements gives
$\pi_f^{-1}(\overline{\vP_g\cdot x_f})=\overline A$.
\end{proof}

\begin{proposition}\label{prop:propagation}
For every $P\in\mathcal P_f^\psi$, one has
$\nu_g\in\SSupp(I_\psi P)\Longrightarrow\nu_f\in\SSupp(P)$.
\end{proposition}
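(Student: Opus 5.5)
Pass through the fiber model of Lemma~\ref{lem:fiber-model}. There $I_\psi P$ corresponds to $(q_g)_!q_f^*\operatorname{Fib}_fP$, with the Arthur covectors $\nu_a=(x_a,e)$ on $X_a=\vR/\vP_a$. By the smooth inverse-image statement proved inside Lemma~\ref{lem:fiber-model}, singular support on $\mathcal X_a^\psi$ at $\nu_a$ matches singular support of the fiber restriction on $X_a$ at $(x_a,e)$. It therefore suffices to prove that, for $F=\operatorname{Fib}_fP\in D^bC^{\vK^{\alg}}(X_f)$,
\[
(x_g,e)\in\SSupp\bigl((q_g)_!q_f^*F\bigr)\ \Longrightarrow\ (x_f,e)\in\SSupp(F).
\]

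\textbf{Step 1: singular support of $q_f^*F$.} The map $q_f:Z\to X_f$ is a smooth submersion (a fiber bundle with fiber $\vP_f/(\vP_g\cap\vP_f)$). Proposition~\ref{prop:ks}(ii) then gives $\SSupp(q_f^*F)=(q_f)_d(q_f)_\pi^{-1}\SSupp(F)$.

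\textbf{Step 2: singular support of the pushforward.} The map $q_g$ is proper, since $Z$ and $X_g$ are projective. Proposition~\ref{prop:ks}(i) therefore gives
\[
\SSupp\bigl((q_g)_!q_f^*F\bigr)\subseteq(q_g)_\pi(q_g)_d^{-1}(q_f)_d(q_f)_\pi^{-1}\SSupp(F).
\]
Unwinding this, if $(x_g,e)$ lies in the left side, there are a point $z=r(\vP_g\cap\vP_f)$ with $q_g(z)=x_g$ and a covector $\eta\in T^*_{q_f(z)}X_f$ with $(q_f(z),\eta)\in\SSupp(F)$, such that ${}^tdq_f(\eta)={}^tdq_g(e)$ in $T^*_zZ$. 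We may take $r\in\vP_g$. Using $B$ to identify $T^*_{r\vP_a}X_a$ with $\operatorname{Ad}(r)\vfu_a$, the compatibility condition is the standard incidence relation for the correspondence $Z\subset X_g\times X_f$: the covector $(-e,\eta)$ is conormal to the $\vR$-orbit of $z$ in $X_g\times X_f$. Equivalently, $\eta=e$ as an element of $\vfr$, and $e\in\operatorname{Ad}(r)\vfu_f\cap\vfu_g$. This is the incidence calculation underlying \cite[Lemma~4.6.2]{AIMV26}, and I would carry it out by pairing against $\Lie(\vR)$ modulo the two isotropy algebras.

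\textbf{Step 3: pin down $r$ via the three-parabolics identity.} We have $\operatorname{Ad}(r^{-1})e\in\vfu_f$ with $r\in\vP_g$. We must still check $r^{-1}\in\overline{\vP_f\vP_g}$. Here Lemma~\ref{lem:diagonal-orbit-closure} enters: the support of the convolution kernel is the closure of the diagonal orbit, and its fiber over $x_g$ is $\overline{\vP_g\cdot x_f}$, whose preimage is $\overline{\vP_g\vP_f}$, the inverse of $\overline{\vP_f\vP_g}$. Lemma~\ref{lem:three-parabolics} then forces $r^{-1}\in\vP_f$, so $q_f(z)=x_f$, and we obtain $(x_f,e)\in\SSupp(F)$.

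\textbf{Main obstacle.} The delicate point is the covector bookkeeping in Step 2. One must fix the signs and the factor $2\operatorname{Re}$ so that the transported covector is exactly $e$ and not $-e$ or $\operatorname{Ad}(r)$ of something else. One must also check that the base point can be moved to $x_f$ while tracking $\operatorname{Ad}(r)$ correctly. I would first verify the incidence relation on the open orbit $\vP_g\vP_f$, where $Z\to X_g\times X_f$ is an orbit inclusion, and then pass to closures via Lemma~\ref{lem:diagonal-orbit-closure}.
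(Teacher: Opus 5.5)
\textbf{There is a genuine gap in Step~2.} You assert that $q_g$ is proper because $Z$ is projective. But $Z=\vR/(\vP_g\cap\vP_f)\simeq\vR\cdot(x_g,x_f)$ is projective only when $\vP_g\cap\vP_f$ contains a Borel subgroup of $\vR$. For $N\gg0$ the roots of $\vU$ are positive for $\lambda_g^{\RR}$, so every Borel subgroup of $\vP_g$ contains $\vU$. Projectivity therefore requires $\vU\subseteq\vP_f$, that is, $\alpha(\lambda_f^{\RR})\ge0$ for every root $\alpha$ of $\vU$. The weakly fair condition $\alpha(\zeta_f^{\RR})>0$ does not guarantee this, because $\lambda_f^{\RR}=\tfrac12h+\zeta_f^{\RR}$.

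Whenever this fails, $Z$ is not complete and the diagonal orbit is not closed in $X_g\times X_f$. Then $q_g$ is not proper, and Proposition~\ref{prop:ks}(i) cannot be applied to $(q_g)_!$. Your estimate misses exactly the covectors over points $(x_g,y)$ with $y\in\overline{\vP_g\cdot x_f}\setminus\vP_g\cdot x_f$. There the microsupport of the extension by zero is not given by the functorial formula on $Z$. Compare $j_!\CC_{(0,\infty)}$ on $\RR$, whose microsupport at $0$ is a half-line.

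A symptom of the gap: in your argument $r\in\vP_g$ holds automatically, so neither Lemma~\ref{lem:diagonal-orbit-closure} nor the closure in Lemma~\ref{lem:three-parabolics} does any work. Also, ``verify on the orbit, then pass to closures'' is not a valid step. At boundary points, the microsupport of an extension by zero can be strictly larger than the closure of its microsupport over the orbit.

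\textbf{What is missing is an actual microsupport bound at the boundary of the diagonal orbit.} The paper proceeds as follows.
\begin{enumerate}
\item It factors $q_g=p_g\circ\iota$, where $\iota=(q_g,q_f)\colon Z\to X_g\times X_f$ is a locally closed immersion and $p_g$ is the first projection. The map $p_g$ is proper because $X_f$ is projective.
\item Proposition~\ref{prop:ks}(i) for $p_g$ produces $y=r\vP_f$ with $((x_g,y),(e,0))\in\SSupp(\iota_!q_f^*F)$. The support condition and Lemma~\ref{lem:diagonal-orbit-closure} then give $r\in\overline{\vP_g\vP_f}$.
\item To control this covector, it uses the Whitney stratification of $X_g\times X_f$ by the incidence pieces $(X_g\times S)\cap\mathcal T$. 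Here $S$ is a $\vK$-orbit in $X_f$ and $\mathcal T$ is any $\vR$-orbit in $\overline{\vR\cdot(x_g,x_f)}$, boundary orbits included.
\item It bounds $\SSupp(\iota_!q_f^*F)$ by the union of the conormal bundles of these pieces \cite[Proposition~4.8]{GPS23}. The explicit conormal fiber at $(x_g,r\vP_f)$ then yields $e\in\operatorname{Ad}(r)\vfu_f$.
\item Only at this point does Lemma~\ref{lem:three-parabolics} force $r^{-1}\in\vP_f$, hence $y=x_f$.
\item Near $(x_g,x_f)$ the immersion $\iota$ is closed. The closed-immersion equality in Proposition~\ref{prop:ks}(i) and the smooth pullback formula give $\xi\in\SSupp(F)_{x_f}$ with ${}^tdq_f(\xi)={}^tdq_g(e)$.
\end{enumerate}
From there, your identification $\xi=e$ through the inclusions $\vfu_a\hookrightarrow\Lie(\vP_f\cap\vP_g)^\perp$ finishes the proof exactly as in the paper.
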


\begin{proof}
Via the normalized equivalences of Lemma~\ref{lem:fiber-model}, we can write
$P$ also for $\operatorname{Fib}_f(P)$ and write
$I=I_{\mathrm{fib}}=(q_g)_!q_f^*$ within this proof on the flag-variety
categories. The lemma identifies the singular-support statements for
$I_\psi P$ and its fiber restriction. All sheaf operations
in this section are derived.
We first treat $e=0$. By \cite[Proposition~3.7.1]{AIMV26}, the stabilizer
orbit of $e$ is open and dense in the conormal fiber. If $e=0$, that orbit
is the singleton $\{0\}$, so the conormal fiber is zero and the
corresponding $\vK$-orbit in the flag fiber is open. When $e=0$,
Lemma~\ref{lem:three-parabolics} gives
$\overline{\vP_f\vP_g}=\vP_f$, and hence
$\vP_g\subseteq\vP_f$. Therefore $Z=X_g$,
$q_g=\operatorname{id}$, and
$I\simeq q_f^*$. The hypothesis gives
$(x_g,0)\in\SSupp(q_f^*P)$. The smooth inverse-image formula
\eqref{eq:smooth-ss} gives $\xi\in\SSupp(P)_{x_f}$ with
${}^tdq_f\xi=0$. Since $q_f$ is a submersion, ${}^tdq_f$ is injective.
Thus $\xi=0$, and $(x_f,0)\in\SSupp(P)$.

Assume henceforth that $e\neq0$. Factor $q_g=p_g\circ\iota$, where
$\iota:Z\to X_g\times X_f$ is given by
$\iota(z)=(q_g(z),q_f(z))$ and $p_g$ is the first projection. Set
$\cF=\iota_!q_f^*P$. Then $I P\simeq(p_g)_!\cF$. Since $X_f$ is
projective, $p_g$ is proper.

Suppose $\nu_g=(x_g,e)\in\SSupp((p_g)_!\cF)$. By
\eqref{eq:proper-ss}, there exists $y\in X_f$ such that
\begin{equation}\label{eq:lifted-covector}
  \bigl((x_g,y),(e,0)\bigr)\in\SSupp(\cF).
\end{equation}
Write $y=r\vP_f$.

For a $\vK$-orbit
$S\subset X_f$ and an $\vR$-orbit
$\mathcal T\subset X_g\times X_f$, the subsets
$\mathcal S_{S,\mathcal T}=(X_g\times S)\cap\mathcal T$
form the incidence decomposition used in
\cite[(4.6.5)]{AIMV26}. We verify that it is Whitney.
The $\vK$-orbit decomposition of $X_f$ and the $\vP_f$-orbit
decomposition of $X_g$ are finite algebraic orbit decompositions
(for the latter, use the Bruhat decomposition), hence Whitney
\cite[Section~2.3, p.~3582]{FR18}.
Choose a local analytic section $s:U\to\vR$ of
$\vR\to X_f$, so that $s(y)x_f=y$. The analytic isomorphism
\[
 X_g\times U\longrightarrow X_g\times U,\qquad
 (z,y)\longmapsto(s(y)^{-1}z,y)
\]
takes $\mathcal T\cap(X_g\times U)$ to
$O_{\mathcal T}\times U$, where
$O_{\mathcal T}=\{z\in X_g:(z,x_f)\in\mathcal T\}$ is a
$\vP_f$-orbit. It therefore takes the incidence pieces to
$O_{\mathcal T}\times(S\cap U)$, a product of Whitney strata.
Products and analytic changes of coordinates preserve the Whitney
conditions. The same local description gives
\[
 \overline{\mathcal S_{S,\mathcal T}}
 =(X_g\times\overline S)\cap\overline{\mathcal T},
\]
which verifies the frontier condition. If connected strata are
required, replace each $\vK$-orbit by its finitely many
$\vK^\circ$-orbits; the fibers $O_{\mathcal T}$ are connected
because $\vP_f$ is connected. This leaves all displayed conormal
unions unchanged.

The second projection
$\mathcal T\to X_f$ is a smooth surjective map of homogeneous spaces,
so $\mathcal T$ meets $X_g\times S$ transversely. Since $P$ is
constructible along the $\vK$-orbits, $q_f^*P$ is constructible along
the induced strata on $Z$. The image of $\iota$ is a union of strata,
so its extension by zero is constructible for this stratification as well.
The Whitney microsupport estimate
\cite[Proposition~4.8]{GPS23}, applied to $\iota_!q_f^*P$, gives
the following inclusion, also recorded in \cite[(4.6.5)]{AIMV26}.
Here we use
$\operatorname{Supp}(\iota_!q_f^*P)\subseteq
\overline{\vR\cdot(x_g,x_f)}$ to restrict the orbit range:
\begin{equation}\label{eq:aimv-incidence}
 \SSupp(\iota_!q_f^*P)
 \subseteq
 \bigcup_{\substack{S\in\vK\backslash X_f\\
 \mathcal T\subseteq\overline{\vR\cdot(x_g,x_f)}}}
 T^*_{\mathcal S_{S,\mathcal T}}(X_g\times X_f)
 .
\end{equation}
Here $\mathcal T$ ranges over the $\vR$-orbits contained in the indicated
diagonal-orbit closure, and each
$T^*_{\mathcal S_{S,\mathcal T}}(X_g\times X_f)$ is the ordinary conormal
bundle of the stratum.
The union of ordinary conormal bundles of a Whitney stratification
is closed by Whitney's condition~(a)
\cite[Section~2.1]{GPS23}; hence no additional conormal closures
are needed in \eqref{eq:aimv-incidence}.

For completeness, set $b=(x_g,r\vP_f)\in\mathcal S_{S,\mathcal T}$.
The fixed bilinear form identifies
$T_b^*(X_g\times X_f)=\vfu_g\oplus\operatorname{Ad}(r)\vfu_f$.
The conormal spaces of $X_g\times S$ and $\mathcal T$ at $b$ are
\[
 \{0\}\oplus(\operatorname{Ad}(r)\vfu_f)^{-\vtheta},
 \qquad
 \{(u,-u):u\in\vfu_g\cap\operatorname{Ad}(r)\vfu_f\},
\]
respectively; here $W^{-\vtheta}:=W\cap\ker(\vtheta+1)$ for a
subspace $W\subset\vfr$. Transversality makes the conormal to their
intersection the sum of these two spaces. Thus
\cite[(4.6.6)--(4.6.8)]{AIMV26} gives
\begin{equation}\label{eq:incidence-conormal-fiber}
 T^*_{\mathcal S_{S,\mathcal T},b}(X_g\times X_f)
 =\left\{(u,v)\in\vfu_g\oplus\operatorname{Ad}(r)\vfu_f:
 \begin{aligned}
 &u\in\vfu_g\cap\operatorname{Ad}(r)\vfu_f,\\
 &u+v\in(\operatorname{Ad}(r)\vfu_f)^{-\vtheta}
 \end{aligned}\right\}.
\end{equation}

By \eqref{eq:lifted-covector} and \eqref{eq:aimv-incidence}, the covector
$((x_g,r\vP_f),(e,0))$ belongs to one of these conormal bundles.
Substituting $(u,v)=(e,0)$ in \eqref{eq:incidence-conormal-fiber} gives
$e\in\vfu_g\cap\operatorname{Ad}(r)\vfu_f$ and
$e\in(\operatorname{Ad}(r)\vfu_f)^{-\vtheta}$.
In particular,
\begin{equation}\label{eq:e-in-adhuf}
 e\in\operatorname{Ad}(r)\vfu_f.
\end{equation}
Every base point of a microsupport covector lies in the closed support of the
complex. Hence \eqref{eq:lifted-covector} gives
$(x_g,r\vP_f)\in\operatorname{Supp}(\cF)$.
Since $\cF=\iota_!q_f^*P$ and
$\iota(Z)=\vR\cdot(x_g,x_f)$, one has
$\operatorname{Supp}(\cF)\subseteq\overline{\vR\cdot(x_g,x_f)}$.
Thus $(x_g,r\vP_f)\in\overline{\vR\cdot(x_g,x_f)}$.
Lemma~\ref{lem:diagonal-orbit-closure} first gives
$r\vP_f\in\overline{\vP_g\cdot x_f}$ and then
\begin{equation}\label{eq:r-in-closure}
 r\in\overline{\vP_g\vP_f}.
\end{equation}
Inversion is an algebraic automorphism of $\vR$, so
\eqref{eq:e-in-adhuf} and \eqref{eq:r-in-closure} give
$r^{-1}\in\overline{\vP_f\vP_g}$ and
$\operatorname{Ad}(r^{-1})e\in\vfu_f$.
By Lemma~\ref{lem:three-parabolics}, $r^{-1}\in\vP_f$, and therefore
$y=x_f$. We have proved
\begin{equation}\label{eq:distinguished-covector}
  \bigl((x_g,x_f),(e,0)\bigr)
  \in\SSupp(\iota_!q_f^*P).
\end{equation}

The incidence map $\iota$ is a locally closed immersion. Choose an open
neighborhood $j:V\hookrightarrow X_g\times X_f$ of $(x_g,x_f)$ such that
the restriction $\iota_V:U\hookrightarrow V$ is a
closed immersion, where $U=\iota^{-1}(V)$. By \eqref{eq:locality},
\eqref{eq:distinguished-covector} remains valid after restriction to $V$,
and $j^{-1}(\iota_!q_f^*P)\simeq(\iota_V)_*((q_f^*P)|_U)$.
Let $z_0=1(\vP_g\cap\vP_f)\in Z$. The equality for a closed immersion in
Proposition~\ref{prop:ks}(i), followed by locality
\eqref{eq:locality}, gives
\begin{equation}\label{eq:pullback-iota}
  {}^td\iota_{z_0}(e,0)
  \in\SSupp(q_f^*P)_{z_0}.
\end{equation}
Since $\iota=(q_g,q_f)$,
\begin{equation}\label{eq:qg-pullback}
  {}^td\iota_{z_0}(e,0)
  ={}^tdq_g(e).
\end{equation}

The map $q_f$ is a smooth submersion. By \eqref{eq:smooth-ss}, \eqref{eq:pullback-iota}, and \eqref{eq:qg-pullback}, there exists
$\xi\in\SSupp(P)_{x_f}$
such that
\begin{equation}\label{eq:pullback-equality}
  {}^tdq_f(\xi)
  ={}^tdq_g(e).
\end{equation}

We now identify $\xi$. Use the invariant complex bilinear form fixed above,
and twice its real part for the real cotangent bundles. At the base points,
$T^*_{x_a}X_a\simeq\vfp_a^{\perp}=\vfu_a$ for $a\in\{f,g\}$, while
$T^*_{z_0}Z\simeq\Lie(\vP_f\cap\vP_g)^{\perp}$.
Since $T_{1\vP_a}(\vR/\vP_a)=\vfr/\vfp_a$, the transpose differentials
${}^tdq_f$ and ${}^tdq_g$ are the natural inclusions
$\vfp_a^\perp\hookrightarrow
\Lie(\vP_f\cap\vP_g)^\perp$.

Write $\lambda_a=\tfrac12h+\zeta_a$. The elements $\zeta_a$
centralize the
Arthur $\mathfrak{sl}_2$, so
$[\lambda_a,e]=\tfrac12[h,e]+[\zeta_a,e]=e$.
Hence $e\in\vfu_f\cap\vfu_g$. Under the inclusions just described,
\begin{equation}\label{eq:common-e}
  {}^tdq_f(e)
  ={}^tdq_g(e).
\end{equation}
Combining \eqref{eq:pullback-equality} and \eqref{eq:common-e} yields
${}^tdq_f(\xi)={}^tdq_f(e)$.
Since $q_f$ is a submersion, ${}^tdq_f$ is injective. Therefore $\xi=e$,
and $\nu_f\in\SSupp(P)$.
\end{proof}

\section{Coherent continuation of individual packet members}

Let $\mathcal D(G^\Gamma)$ be the set of $G$-conjugacy classes of strong
real forms. For each $[\gamma]\in\mathcal D(G^\Gamma)$, choose a
representative, let $K_\gamma$ be the complexification of a maximal
compact subgroup of $G(\RR,\gamma)$, and abbreviate
$K\Pi_\lambda^z(\fg,K_\gamma)=KM_{\lambda,\mathrm{fl}}^z(\fg,K_\gamma)$.
Here $M_{\lambda,\mathrm{fl}}^z(\fg,K_\gamma)$ is the category of finite-length
projective type-$z$ $(\fg,K_\gamma)$-modules with infinitesimal character
$\lambda$, and $KM_{\lambda,\mathrm{fl}}^z(\fg,K_\gamma)$ is its
Grothendieck group.
Then we have a natural identification
\begin{equation}\label{eq:strong-real-form-sum}
 K\Pi_\lambda^z(G^\Gamma)
 \xrightarrow{\ \sim\ }
 \bigoplus_{[\gamma]\in\mathcal D(G^\Gamma)}
 K\Pi_\lambda^z(\fg,K_\gamma).
\end{equation}
The realization \eqref{eq:strong-real-form-sum} depends on the chosen
representatives and maximal compact subgroups. Conjugation relates any two
choices, and Lemma~\ref{lem:conjugacy-naturality} shows that this transport
commutes with continuation. Since $\lambda_g$ is in the good range for
$L_N$ and $\lambda_f\in\lambda_g+X^*(L_N)$, each
$M\in K\Pi_{\lambda_g}^z(\fg,K_\gamma)$ determines a unique
$L_N$-coherent family $\Theta_{M,\gamma}$ with
$\Theta_{M,\gamma}(\lambda_g)=M$; see
\cite[Proposition~2.10.2]{AIMV26}. On the summand indexed by $\gamma$,
\cite[Definition~2.10.3]{AIMV26} defines coherent continuation by
\[
 \begin{aligned}
 T_{L_N,\gamma}(\lambda_g\to\lambda_f):
 K\Pi_{\lambda_g}^z(\fg,K_\gamma)
 &\longrightarrow K\Pi_{\lambda_f}^z(\fg,K_\gamma),\\
 M&\longmapsto\Theta_{M,\gamma}(\lambda_f).
 \end{aligned}
\]
Under
\eqref{eq:strong-real-form-sum}, the global continuation is represented summandwise by
\begin{equation}\label{eq:summandwise-continuation}
 \bigoplus_{[\gamma]\in\mathcal D(G^\Gamma)}
 T_{L_N,\gamma}(\lambda_g\to\lambda_f).
\end{equation}
Thus continuation preserves summands on strong real forms.

\begin{proposition}\label{prop:continuation}
For every $\tau_g\in\Pi^z(G^\Gamma)^{\mic}_{\psi^N}$,
$[T\tau_g]\subseteq\Pi^z(G^\Gamma)^{\mic}_{\psi}$.
\end{proposition}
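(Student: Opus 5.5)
The plan is to test $T\tau_g$ against every simple perverse sheaf through the duality of Proposition~\ref{prop:global-duality}, and then locate the sheaves that pair nontrivially using the no-cancellation Lemma~\ref{lem:no-cancellation} and the singular-support transfer Proposition~\ref{prop:propagation}.

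First, expand $T\tau_g=\sum_{\xi\in\Xi_f}n_\xi\,\pi_f(\xi)$. By the diagonal Vogan pairing \eqref{eq:vogan}, the coefficient is
\[
 n_\xi=e(\xi)(-1)^{d(\xi)}\langle T\tau_g,P_f(\xi)\rangle_f .
\]
Now fix $\xi$ with $n_\xi\neq0$. Proposition~\ref{prop:global-duality} rewrites this pairing as $\langle\tau_g,I_K[P_f(\xi)]\rangle_g$. Since $\tau_g=\pi_g(\eta)$ for a single $\eta$, diagonality again shows that $P_g(\eta)$ has nonzero coefficient in $[I P_f(\xi)]_{K_0}$. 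Because $\tau_g$ lies in the packet of $\psi^N$, the sheaf $P_g(\eta)$ is visible at $\nu_g$ by \eqref{eq:packet-criterion}. In particular $P_g(\eta)$ is supported on the Arthur component $\mathcal X_g^\psi$.

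Next, reduce to the Arthur components. By Lemma~\ref{lem:fiber-model} the convolution preserves component labels. Hence if $P_f(\xi)$ lived on a component other than $\mathcal X_f^\psi$, then $I P_f(\xi)$ would live off $\mathcal X_g^\psi$. Its $K_0$-class could not then contain $P_g(\eta)$. So $P:=P_f(\xi)\in\mathcal P_f^\psi$, and \eqref{eq:component-convolution} gives $j_g^*IP\simeq I_\psi P$. We still have a visible simple constituent with nonzero coefficient in $[I_\psi P]_{K_0}$, computed within $\mathcal D_g^\psi\subset\mathcal D_g$ via extension by zero. Lemma~\ref{lem:no-cancellation} then yields $\nu_g\in\SSupp(I_\psi P)$. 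Proposition~\ref{prop:propagation} gives $\nu_f\in\SSupp(P)$, and \eqref{eq:packet-criterion} at the $f$-endpoint shows $\chi^{\mic}_\psi(P_f(\xi))\neq0$. Therefore $\pi_f(\xi)\in\Pi^z(G^\Gamma)^{\mic}_\psi$.

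The main obstacle is bookkeeping rather than geometry: checking that the pairing in Proposition~\ref{prop:global-duality} is exactly the diagonal pairing \eqref{eq:vogan} on both sides. One must confirm that the signs and the transpose convention of \cite[Theorem~3.6.3]{AIMV26} affect only nonzero scalars and not supports. One must also confirm that $T\tau_g$ is a finite virtual sum over $\Xi_f$, allowing all $Y$-blocks, so that nonvanishing of the pairing is equivalent to nonvanishing of $n_\xi$. I would handle this by writing both pairings explicitly on basis elements and noting that each sign factor is $\pm1$.
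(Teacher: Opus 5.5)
Your proposal is correct and follows essentially the same route as the paper. Diagonality of the Vogan pairing together with Proposition~\ref{prop:global-duality} makes the visible $P_g(\eta)$ a constituent of $[IP_f(\xi)]_{K_0}$. The identities \eqref{eq:component-convolution} then place $P_f(\xi)$ on $\mathcal X_f^\psi$, after which Lemma~\ref{lem:no-cancellation}, Proposition~\ref{prop:propagation}, locality, and the packet criterion finish the argument.

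The only difference is cosmetic. You localize the $Y$-block and the connected component in a single step. The paper first fixes the $Y$-block and then locates the component via the implication $j_f^*P_f(\xi)=0\Rightarrow j_g^*IP_f(\xi)=0$.
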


\begin{proof}
Write $\tau_g=\pi_g(\eta)$, and let $\pi_f(\xi)$ be an irreducible having
nonzero coefficient in $T\pi_g(\eta)$. By \eqref{eq:vogan},
$\langle T\pi_g(\eta),P_f(\xi)\rangle_f\neq0$.
Since $P_g(\eta)$ is visible at the Arthur covector, its support lies
in $\mathcal X_g^\psi$: a simple perverse sheaf is supported on one
connected component, and its microstalk vanishes on all other components.
The full representation and sheaf Grothendieck groups decompose into
orthogonal summands over the $Y$-blocks. Global convolution preserves
these blocks. Hence \eqref{eq:duality} first forces $P_f(\xi)$ to lie
in the same $Y$-block as the Arthur point.

We must also locate its connected component. If $j_f^*P_f(\xi)=0$,
then \eqref{eq:component-convolution} implies $j_g^*IP_f(\xi)=0$.
Because $P_g(\eta)$ is supported on $\mathcal X_g^\psi$, the diagonal
pairing would then give
$\langle\pi_g(\eta),[IP_f(\xi)]_{K_0}\rangle_g=0$, contradicting
\eqref{eq:duality}. Thus $P_f(\xi)$ is supported on
$\mathcal X_f^\psi$. Set $P_f^\psi(\xi)=j_f^*P_f(\xi)$ and
$P_g^\psi(\eta)=j_g^*P_g(\eta)$; these are simple perverse sheaves
on the selected components.

By \eqref{eq:duality},
$0\neq\langle T\pi_g(\eta),P_f(\xi)\rangle_f
=\langle\pi_g(\eta),[I P_f(\xi)]_{K_0}\rangle_g$.
The $g$-side pairing is diagonal, so $P_g(\eta)$ has nonzero coefficient
in $[IP_f(\xi)]_{K_0}$. Open and closed restriction is perverse
$t$-exact, and \eqref{eq:component-convolution} identifies its restriction
with $I_\psi P_f^\psi(\xi)$. Therefore the visible simple
$P_g^\psi(\eta)$ has the same nonzero coefficient in
$[I_\psi P_f^\psi(\xi)]_{K_0}$.
Applying Lemma~\ref{lem:no-cancellation} on this direct summand gives
$\nu_g\in\SSupp(I_\psi P_f^\psi(\xi))$.
Proposition~\ref{prop:propagation} gives
$\nu_f\in\SSupp(P_f^\psi(\xi))$. By locality under $j_f$, this is
$\nu_f\in\SSupp(P_f(\xi))$, so the packet criterion
\eqref{eq:packet-criterion} gives
$\pi_f(\xi)\in\Pi^z(G^\Gamma)^{\mic}_{\psi}$.
\end{proof}

For the argument on coherent families, we must realize the two nonzero
strong-real-form summands of extended induction on one compatible chain
of module categories. The induction maps themselves are already defined
in \cite[(2.6.1)]{AIMV26}.

\begin{lemma}\label{lem:adapted-cartan}
Let $G_0$ be a real reductive group.
\begin{enumerate}[label=\textup{(\roman*)}]
 \item If $P_0=L_0N_0$ is a real parabolic with a specified real Levi
 factor, there is a Cartan involution $\theta$ of $G_0$ such that
 $\theta(L_0)=L_0$ and $\theta(\mathfrak p)=\mathfrak p^{\mathrm{op}}$.
 \item Let $\sigma$ be the real conjugation on the complexification of
 $G_0$. If $Q=LU$ is a parabolic such that
 $\sigma(Q)=Q^{\mathrm{op}}$ with common Levi $L$, there is a Cartan
 involution $\theta$ commuting with $\sigma$ such that
 $\theta(L)=L$ and $\theta(Q)=Q$.
\end{enumerate}
Any two Cartan involutions of a real reductive group are conjugate by an
inner automorphism from the identity component.
\end{lemma}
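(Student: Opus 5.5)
The plan is to reduce both parts to a single grading element lying in the center of the Levi, and then to choose a Cartan involution that stabilizes the Levi together with that element. The closing assertion, that any two Cartan involutions of a real reductive group are conjugate by an inner automorphism from the identity component, is the classical Cartan--Mostow conjugacy theorem. For a real reductive group in the Harish-Chandra or Knapp sense this is part of the definition package, and I will simply cite it (e.g.\ Knapp, \emph{Lie Groups Beyond an Introduction}, Cor.~6.19 and Prop.~7.19). The working tool is Mostow's theorem in the following form: if $\mathfrak s\subset\mathfrak g_0$ is a reductive subalgebra stable under an algebraic family of automorphisms, or more generally a self-adjoint subgroup for some Cartan involution, then some Cartan involution of $G_0$ stabilizes it.

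For \textup{(i)}, let $\mathfrak a_0$ be the split part of the center of $\mathfrak l_0$. Choose $H_0\in\mathfrak a_0$ such that $\mathfrak p=\bigoplus_{c\ge0}\mathfrak g_c(\operatorname{ad}H_0)$ and $\mathfrak l_0=Z_{\mathfrak g_0}(H_0)$; this is possible because $\mathfrak l_0$ is a specified real Levi of $\mathfrak p$. Since $L_0$ is reductive, Mostow gives a Cartan involution $\theta$ with $\theta(L_0)=L_0$. Then $\theta$ preserves $\mathfrak z(\mathfrak l_0)$ and its canonical split part $\mathfrak a_0$. The element $\theta$-fixed part of $\mathfrak a_0$ consists of elements $X$ with $\operatorname{ad}X$ having real eigenvalues, and it lies in $\mathfrak k_0$, where $\operatorname{ad}$ has imaginary eigenvalues. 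Hence that fixed part is zero, so $\theta|_{\mathfrak a_0}=-1$. Consequently $\theta(H_0)=-H_0$, and $\theta$ carries the nonnegative eigenspaces of $H_0$ to the nonpositive ones, that is, $\theta(\mathfrak p)=\mathfrak p^{\mathrm{op}}$.

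For \textup{(ii)}, note first that $L=Q\cap\sigma(Q)$ is $\sigma$-stable, so $L$ is defined over $\RR$ and $L_0=L^\sigma$ is a real reductive subgroup. Choose $H\in\mathfrak z(\mathfrak l)$ with real eigenvalues under $\operatorname{ad}$ defining $\mathfrak q$ as the sum of nonnegative eigenspaces. Since $\sigma(\mathfrak q)=\mathfrak q^{\mathrm{op}}$ and $\sigma$ is antilinear, the element $\sigma(H)$ defines $\mathfrak q^{\mathrm{op}}$ in the same way. Put $H'=\tfrac12(H-\sigma(H))$. Then $H'$ still defines $\mathfrak q$, because its eigenvalues on $\mathfrak u$ are averages of positive numbers, and its centralizer is still $\mathfrak l$. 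Moreover $\sigma(H')=-H'$, so $iH'\in\mathfrak z(\mathfrak l_0)$ is an elliptic element. Applying Mostow to $L_0$, or to $L_0$ together with the compact torus generated by $iH'$, gives a Cartan involution $\theta$ of $G_0$, extended to a complex-linear involution commuting with $\sigma$, such that $\theta(L)=L$. As in \textup{(i)}, $\theta$ preserves the compact part of $\mathfrak z(\mathfrak l_0)$, and the $\theta$-anti-fixed part of that compact part lies in $\mathfrak p_0$, which contains no nonzero elliptic elements, so $\theta$ fixes it. Hence $\theta(iH')=iH'$, and therefore $\theta(\mathfrak q)=\mathfrak q$.

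The main obstacle I expect is the passage from the Lie algebra statements to group statements when $G_0$ is disconnected or nonlinear, as in the canonical covers used for $\rho_{\fu}$. The plan there is to apply Mostow on the linear adjoint image and lift $\theta$, using that the real reductive group axioms supply a global Cartan involution whose restriction to the identity component is determined by its differential. The equalities $\theta(L_0)=L_0$ and the stabilization of $P$ or $Q$ then follow, because each of these subgroups is the normalizer or centralizer of the relevant Lie subalgebra or grading element.
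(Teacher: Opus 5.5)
Your proof is correct. In part (ii) it is essentially the paper's argument. You antisymmetrize the grading element under $\sigma$, so that $iH'$ becomes an elliptic central element of $\mathfrak l_0$, and then use a Cartan involution compatible with $L_0$, which must fix that element. The paper runs these steps in the other order. It takes $H_Q$ integral, picks a Cartan involution $\theta_L$ of $L_0$, notes that its maximal compact subgroup contains the central compact torus generated by $iH_Q$, and extends $\theta_L$ to $G_0$ by the Adams--Ta\"ibi extension theorem. You instead take a Mostow-type $\theta$ on $G_0$ stabilizing $L_0$ and read off $\theta(iH')$ from eigenvalues of $\operatorname{ad}$.

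Part (i) is where the routes genuinely differ. The paper starts from an arbitrary Cartan involution. It takes the $\theta$-stable Levi factor $\mathfrak p_0\cap\theta(\mathfrak p_0)$ from the Langlands decomposition, for which $\theta(\mathfrak p)$ is automatically opposite, and then conjugates $\theta$ by an element of $N_0$ to move that Levi factor onto the prescribed $L_0$. No Mostow-type theorem is needed. Your version first stabilizes $L_0$ via Mostow and then shows that $\theta$ reverses the hyperbolic grading element. This is heavier in (i), but it handles both parts by one mechanism. It also proves something stronger: every Cartan involution stabilizing the Levi automatically satisfies $\theta(\mathfrak p)=\mathfrak p^{\mathrm{op}}$, respectively $\theta(\mathfrak q)=\mathfrak q$.

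One slip needs fixing. If $\mathfrak a_0$ is defined through eigenvalues of $\operatorname{ad}$, it contains the central elements of $\mathfrak g_0$. A $\theta$-fixed element of $\mathfrak a_0$ is then only forced to satisfy $\operatorname{ad}X=0$, not $X=0$; for example, take $G_0=U(1)\times SL(2,\RR)$ with $\mathfrak l_0=\mathfrak u(1)\oplus\mathfrak a$. So what you actually get is $\theta(H_0)\equiv -H_0$ in (i), and $\theta(iH')\equiv iH'$ in (ii), modulo $\mathfrak z(\mathfrak g_0)$, rather than $\theta|_{\mathfrak a_0}=-1$. That is enough, because the parabolics depend only on $\operatorname{ad}H_0$ and $\operatorname{ad}H'$.

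Your closing worry about nonlinear covers is unnecessary here. The lemma is applied only to $G(\RR,\delta_G)$ and $M_1(\RR,\delta_1)$, which are real points of algebraic groups.
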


\begin{proof}
For \textup{(i)}, begin with any Cartan involution. The Langlands
decomposition of $P_0$ supplies a $\theta$-stable real Levi factor and an
opposite nilradical. Any two real Levi factors of $P_0$ are conjugate by
$N_0$, so conjugating $\theta$ by an element of $N_0$ gives the specified
$L_0$.

For \textup{(ii)}, choose an integral grading element
$H_Q\in\mathfrak z(\mathfrak l)$ for $Q$. Since $\sigma(Q)=Q^{\mathrm{op}}$,
the element $-\sigma(H_Q)$ lies in the same open grading chamber as $H_Q$.
The integral element $H_Q-\sigma(H_Q)$ therefore defines the same parabolic
and satisfies $\sigma(H_Q-\sigma(H_Q))=-(H_Q-\sigma(H_Q))$; replace $H_Q$
by this element. Then $iH_Q\in\mathfrak l_0$ generates a compact central
torus $T_c\subset L_0$. Choose a Cartan involution $\theta_L$ of $L_0$.
Its fixed maximal compact subgroup contains $T_c$, since $T_c$ is central,
so $\theta_L(H_Q)=H_Q$. By
\cite[Theorem~3.13(1)(b)]{AT18}, $\theta_L$ extends to a Cartan involution
$\theta$ of $G_0$ preserving $L_0$. It fixes $H_Q$ and therefore preserves
the nonnegative grading parabolic $Q$. The conjugacy assertion is
\cite[Theorem~3.13(1)(a)]{AT18}. This is the numbering of the
published version; the same theorem is numbered~3.12 in
\href{https://arxiv.org/abs/1611.05956v2}{arXiv:1611.05956v2}.
\end{proof}

\begin{lemma}\label{lem:fixed-pair}
Let $\delta_2\in M_2^\Gamma\setminus M_2$ be a strong real form. Assume
that its images $\delta_1\in M_1^\Gamma$ and
$\delta_G\in G^\Gamma$ are again strong real forms. Equivalently,
$\delta_2^2\in Z(M_1)$ and $\delta_1^2\in Z(G)$, where $\delta_1$ and
$\delta_G$ denote the successive images of
$\delta_2$.
There are compatible choices of maximal compact subgroups whose
complexifications satisfy $K_{2,\delta_2}=K_{1,\delta_1}\cap M_2$ and
$K_{1,\delta_1}=K_{\delta_G}\cap M_1$.
On the corresponding Grothendieck groups of projective type-$z_2$,
type-$z_1$, and type-$z$ modules for
$(\fm_2,K_{2,\delta_2})$, $(\fm_1,K_{1,\delta_1})$, and
$(\fg,K_{\delta_G})$, respectively, the nonzero restrictions of the two maps in
\cite[(2.6.1)]{AIMV26} are, respectively, the cohomological induction
$R_{\fq_2}^{\fm_1}$ and the real-parabolic operator
$R_{\fq_1}^{\fg}$ with convention \eqref{eq:real-induction-normalization}.
\end{lemma}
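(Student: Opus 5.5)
The plan is to build the two Cartan involutions from the inside out and then read off the induction maps summand by summand. First, fix the real structure. Let $\sigma_G$ be the antiholomorphic conjugation attached to $\delta_G$; it restricts to $\sigma_1=\sigma_{\delta_1}$ on $M_1$ and to $\sigma_2=\sigma_{\delta_2}$ on $M_2$. The condition $\delta_2^2\in Z(M_1)$ says that $\operatorname{Ad}(\delta_2)$ is an involution of $M_1$. Likewise $\delta_1^2\in Z(G)$ says that $\operatorname{Ad}(\delta_1)$ is an involution of $G$. This yields the equivalence stated in the lemma: the strong-real-form condition for the image is exactly centrality of the square in the larger group. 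Next, use the Jordan decomposition of \cite[Definition~4.1.1]{AIMV26} to record how the parabolics sit relative to these real structures. The parabolic $Q_1=M_1U_1$ is a real parabolic for $\sigma_G$ with real Levi $M_1$. The parabolic $Q_2=M_2U_2\subset M_1$ is $\theta$-stable, or equivalently satisfies $\sigma_1(Q_2)=Q_2^{\mathrm{op}}$ with common Levi $M_2$, which is the hypothesis of Lemma~\ref{lem:adapted-cartan}(ii).

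Now choose the Cartan involutions in order. Apply Lemma~\ref{lem:adapted-cartan}(i) to $G_0=G(\RR,\delta_G)$ and the real parabolic $Q_1(\RR)$ with specified real Levi $M_1(\RR)$. This gives a Cartan involution $\theta_G$ preserving $M_1$, so that $\theta_1:=\theta_G|_{M_1}$ is a Cartan involution of $M_1(\RR,\delta_1)$. Its complexified fixed group is $K_{1,\delta_1}=K_{\delta_G}\cap M_1$, because the fixed points of a Cartan involution on a $\theta$-stable Levi form a maximal compact subgroup of that Levi.

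Inside $M_1$, Lemma~\ref{lem:adapted-cartan}(ii) produces some Cartan involution $\theta_1'$ of $M_1(\RR,\delta_1)$ commuting with $\sigma_1$ and preserving both $M_2$ and $Q_2$. By the conjugacy statement at the end of Lemma~\ref{lem:adapted-cartan}, $\theta_1'=\operatorname{Ad}(m)\theta_1\operatorname{Ad}(m)^{-1}$ for some $m$ in the identity component of $M_1(\RR,\delta_1)$. Conjugating $\theta_G$ by $m$ keeps $M_1$ stable, because $m\in M_1$, and keeps $Q_1$ real, because $m$ normalizes $M_1U_1$. So after this replacement the restriction of $\theta_G$ to $M_1$ is $\theta_1'$. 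The intersection $K_{2,\delta_2}:=K_{1,\delta_1}\cap M_2$ is then the complexified fixed group of $\theta_1'|_{M_2}$, which is a Cartan involution of $M_2(\RR,\delta_2)$. This gives both displayed equalities.

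Finally, identify the maps. By construction, \cite[(2.6.1)]{AIMV26} is a sum over strong real forms of $M_2^\Gamma$ of the induction functors from the pair attached to $\delta_2$ to the pairs attached to its images. Summands whose image is not a strong real form vanish, which is the sense of \emph{nonzero restrictions}. On the summand of $\delta_2$, with the compatible $K$'s just chosen, the first map is Zuckerman cohomological induction from $(\fm_2,K_{2,\delta_2})$ through the $\theta$-stable $\fq_2$. The second is real parabolic induction from $Q_1(\RR)$, normalized as in \eqref{eq:real-induction-normalization} via \cite[Sections~2.5--2.6]{AIMV26}. The projective types follow the ledger \eqref{eq:projective-type-ledger}. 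The step I expect to be the main obstacle is the conjugation step: the replacement of $\theta_1$ by $\theta_1'$ must be realized by an element of $M_1$. Otherwise either $\theta_G$ stops preserving $M_1$, or the $\sigma$-compatibility needed for $Q_2$ is lost. Using the identity component of $M_1(\RR,\delta_1)$ from Lemma~\ref{lem:adapted-cartan} is exactly what handles this.
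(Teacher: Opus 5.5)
Your proposal is correct and follows the paper's proof essentially step for step. You take a Cartan involution of $G(\RR,\delta_G)$ adapted to $Q_1$ from Lemma~\ref{lem:adapted-cartan}(i) and a $Q_2$-stable Cartan involution of $M_1(\RR,\delta_1)$ from part~(ii). You then conjugate the former by an element of $M_1(\RR,\delta_1)$ so the restrictions agree, define the $K$'s by intersection, and read off the maps summandwise from \cite[(2.6.1)]{AIMV26}.

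Two points are worth tightening, though neither is a gap. First, the paper derives $\operatorname{Ad}(\delta_1)Q_2=Q_2^{\mathrm{op}}$ and $\operatorname{Ad}(\delta_G)Q_1=Q_1$ from \cite[Lemma~2.6.1]{AIMV26} rather than treating them as definitional. Second, what conjugation by $m$ must preserve is $\theta_G(\fq_1)=\fq_1^{\mathrm{op}}$, which holds because $m$ normalizes both $Q_1$ and $Q_1^{\mathrm{op}}$. The reality of $Q_1$ does not depend on $\theta_G$, so "keeps $Q_1$ real" is not the relevant claim.
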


\begin{proof}
The extended Levi inclusions in the Jordan decomposition are
$M_2^\Gamma\subset M_1^\Gamma\subset G^\Gamma$;
see \cite[Definition~4.1.1(5),(7)]{AIMV26}. Since $\delta_2$ is a strong
real form, the required finite-order condition is already satisfied.
Its image in $M_1^\Gamma$ is strong exactly when
$\delta_2^2\in Z(M_1)$, and the next image in $G^\Gamma$ is strong
exactly when $\delta_1^2\in Z(G)$. Formula \cite[(2.6.1)]{AIMV26}
sends a summand to zero when the corresponding centrality condition
fails. Moreover, $Q_2$ is $\theta$-stable and $Q_1$ is real in the
extended-group sense.
Thus \cite[Lemma~2.6.1]{AIMV26} gives
$\operatorname{Ad}(\delta_1)Q_2=Q_2^{\mathrm{op}}$ and
$\operatorname{Ad}(\delta_G)Q_1=Q_1$.

Lemma~\ref{lem:adapted-cartan} allows us to choose the maximal compact
subgroups simultaneously as follows.
Start with a Cartan involution $\theta_G^0$ of
$G(\RR,\delta_G)$ adapted to the real parabolic $Q_1(\RR)$; it preserves
its Levi subgroup $M_1(\RR,\delta_1)$ and satisfies
$\theta_G^0(\fq_1)=\fq_1^{\mathrm{op}}$. Here
$\operatorname{Ad}(\delta_1)Q_2=Q_2^{\mathrm{op}}$ is the extended-group
condition. Part~\textup{(ii)} of the lemma supplies a Cartan
involution $\theta_1$ of $M_1(\RR,\delta_1)$ that preserves $Q_2$ and
$M_2$, so $\theta_1(\fq_2)=\fq_2$. Any two Cartan
involutions of $M_1(\RR,\delta_1)$ are conjugate
by $M_1(\RR,\delta_1)$. Hence, after replacing $\theta_G^0$ by
$\theta_G=\operatorname{Ad}(m)\circ\theta_G^0\circ
\operatorname{Ad}(m^{-1})$ for some $m\in M_1(\RR,\delta_1)$,
its restriction to $M_1$ is $\theta_1$. This replacement still preserves
$M_1$ and remains adapted to $Q_1$, since $M_1$ normalizes $Q_1$ and
$Q_1^{\mathrm{op}}$. If $K_{\delta_G}$ is the complexification of the
maximal compact subgroup fixed by $\theta_G$, set
$K_{1,\delta_1}=K_{\delta_G}\cap M_1$ and
$K_{2,\delta_2}=K_{1,\delta_1}\cap M_2$.
These choices give the $(\fm_i,K_{i,\delta_i})$- and
$(\fg,K_{\delta_G})$-module categories. On this fixed chain, the two nonzero summands of
\cite[(2.6.1)]{AIMV26} are the Euler-characteristic cohomological
induction $R_{\fq_2}^{\fm_1}$ for the $\theta_1$-stable parabolic
$\fq_2$, followed by the real-parabolic operator
$R_{\fq_1}^{\fg}$ for $Q_1(\RR)$ with
convention \eqref{eq:real-induction-normalization}.
\end{proof}

\begin{lemma}\label{lem:conjugacy-naturality}
Let $g\in G$ satisfy $g\delta_Gg^{-1}=\gamma$, and put
$H^g=gHg^{-1}$ and $L_N^g=gL_Ng^{-1}$.
In \eqref{eq:strong-real-form-sum}, choose
$K_\gamma=gK_{\delta_G}g^{-1}$.
For $a\in\{f,g\}$, define
$\lambda_a^g=\lambda_a\circ\operatorname{Ad}(g^{-1})\in
(\operatorname{Lie}H^g)^*$; conjugation identifies
$X^*(H)$ with $X^*(H^g)$ and $X^*(L_N)$ with $X^*(L_N^g)$.
Transport of module structures by $\operatorname{Ad}(g)$ gives isomorphisms
$C_g:K\Pi_{\lambda_a}^z(\fg,K_{\delta_G})\to
K\Pi_{\lambda_a^g}^z(\fg,K_\gamma)$ for $a\in\{f,g\}$.
These isomorphisms preserve projective type and satisfy
\begin{equation}\label{eq:conjugacy-naturality}
 C_g\circ T_{L_N,\delta_G}(\lambda_g\to\lambda_f)
 =
 T_{L_N^g,\gamma}(\lambda_g^g\to\lambda_f^g)\circ C_g.
\end{equation}
Under the conjugacy identification of
\cite[Definition~2.4.2]{AIMV26}, the operator on the right is the
$\gamma$-summand denoted by
$T_{L_N,\gamma}(\lambda_g\to\lambda_f)$ in
\eqref{eq:summandwise-continuation}.
\end{lemma}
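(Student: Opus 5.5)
We prove \eqref{eq:conjugacy-naturality} by transport of structure along the inner automorphism $\operatorname{Ad}(g)$ of $G$. Write $c_g$ for this automorphism. Since $c_g$ carries $K_{\delta_G}$ onto $K_\gamma=gK_{\delta_G}g^{-1}$ and $\fg$ onto itself, twisting a $(\fg,K_{\delta_G})$-module $M$ by $c_g^{-1}$ produces a $(\fg,K_\gamma)$-module $M^g$. The functor $M\mapsto M^g$ is an exact equivalence of categories, so it descends to Grothendieck groups; this is $C_g$.

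The first step is to check that $C_g$ has the asserted source and target.
\begin{enumerate}
\item \emph{Infinitesimal character.} Under the Harish-Chandra isomorphism, twisting by $c_g$ sends infinitesimal character $\lambda$ to $\lambda\circ\operatorname{Ad}(g^{-1})$ on $H^g$. Since $g$ is inner, this is even the same element of $\fh^*/W$ after the canonical identification.
\item \emph{Projective type.} The type is read off from the action of the relevant central (or canonical-cover) element. Conjugation fixes $Z(G)$ pointwise and lifts to the canonical covers, so type $z$ is preserved.
\end{enumerate}

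The main step is to compare coherent families. Fix $M$ at $\lambda_g$ and let $\Theta_{M,\delta_G}$ be its unique $L_N$-coherent family. Define
\[
\Theta^g(\mu^g)=C_g\bigl(\Theta_{M,\delta_G}(\mu)\bigr),
\qquad \mu\in\lambda_g+X^*(L_N).
\]
We verify that $\Theta^g$ is an $L_N^g$-coherent family for $(\fg,K_\gamma)$. The defining identity is $\Theta(\mu)\otimes F=\sum_{\nu}\Theta(\mu+\nu)$, where $F$ runs over finite-dimensional representations of $G$ and $\nu$ over the weights of $F$, counted with multiplicity, in $X^*(L_N)$. Twisting by $c_g$ gives $F^g\cong F$, because $g$ acts on $F$ itself. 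The weights of $F$ relative to $H^g$ are the conjugates of the weights relative to $H$. Hence the coherence identity transports verbatim, and the infinitesimal-character condition is item (1). In addition, $\Theta^g(\lambda_g^g)=C_gM$. Uniqueness in \cite[Proposition~2.10.2]{AIMV26} applies, since the good-range condition for $L_N^g$ is the conjugate of Lemma~\ref{lem:continuation-Levi}(1). It gives $\Theta^g=\Theta_{C_gM,\gamma}$, and evaluating at $\lambda_f^g$ yields \eqref{eq:conjugacy-naturality}.

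The last assertion, that the operator on the right is the $\gamma$-summand in \eqref{eq:summandwise-continuation}, should follow from \cite[Definition~2.4.2]{AIMV26}. That definition identifies the data $(H,\lambda)$ and $(H^g,\lambda^g)$ precisely through $\operatorname{Ad}(g)$, and we expect it to carry $L_N$-coherent families to $L_N^g$-coherent families. I expect this bookkeeping to be the main point of care. Two things could go wrong. First, $g$ is determined only up to the stabilizer of $\delta_G$. Second, different representatives yield different $K_\gamma$. For the first issue, changing $g$ by $k\in\operatorname{Stab}(\delta_G)$ composes $C_g$ with twisting by $k$, and by the same uniqueness argument this twist commutes with continuation. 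So \eqref{eq:conjugacy-naturality} is insensitive to the choice, and the identification with \eqref{eq:summandwise-continuation} is well defined.
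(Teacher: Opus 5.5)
Your proposal is correct and follows essentially the same route as the paper. Both arguments transport structure by $\operatorname{Ad}(g)$, preserve infinitesimal character and projective type (the paper justifies the latter more carefully, noting that inner conjugation acts trivially on $\pi_1(G)$, so its lifts fix the covering kernels pointwise), observe that tensor identities are preserved because $F^g\simeq F$, and then conclude by the uniqueness in \cite[Proposition~2.10.2]{AIMV26} and the identification of \cite[Definition~2.4.2]{AIMV26}. One small correction: via \cite[Definition~2.10.1]{AIMV26} an $L_N$-coherent family is the restriction of a classical coherent family on $\lambda_g+X^*(H)$, whose tensor identity sums over all weights of $F$ and not only those in $X^*(L_N)$, so you should transport the classical extension to $\lambda_g^g+X^*(H^g)$ and then restrict, exactly as the paper does.
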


\begin{proof}
Since $G$ is connected, choose a path $g_t$ from $1$ to $g$. The maps
$x\mapsto g_txg_t^{-1}$ give a based homotopy from the identity to
$\operatorname{Int}(g)$. Equivalently, inner conjugation acts trivially on
$\pi_1(G)$. Consequently, on every finite cover in the canonical-cover
system, $\operatorname{Int}(g)$ has a unique lift fixing the identity above
$1$. These lifts are compatible and fix the covering kernels pointwise;
passing to the inverse limit gives the canonical identification used in
\cite[Definitions~2.4.2--2.4.3]{AIMV26}. The corresponding kernel character
and the projective type $z$ are unchanged, so each $C_g$ is an isomorphism
of the displayed type-$z$ groups.
The inner automorphism $\operatorname{Ad}(g)$ fixes
$Z(U(\fg))$ pointwise. Moreover, the transport of any finite-dimensional
algebraic $G$-module by $\operatorname{Ad}(g)$ is isomorphic to the
original module. Hence $C_g$ preserves the tensor identities defining
classical coherent families. It carries a classical family on
$\lambda_g+X^*(H)$ to one on
$\lambda_g^g+X^*(H^g)$ and carries its restriction on $L_N$ to the
corresponding restriction on $L_N^g$. Good range is invariant under this
transport, so the uniqueness in
\cite[Proposition~2.10.2]{AIMV26} and the definition of continuation give
\eqref{eq:conjugacy-naturality}.
Under the conjugacy identification of
\cite[Definition~2.4.2]{AIMV26}, the transported Cartan, Levi, and weights
are identified with the original abstract data. Thus the operator on the
right of \eqref{eq:conjugacy-naturality} is the $\gamma$-summand
denoted by $T_{L_N,\gamma}(\lambda_g\to\lambda_f)$ in
\eqref{eq:summandwise-continuation}.
\end{proof}

\begin{lemma}\label{lem:coherent-family}
Fix a strong real form $\delta_2$ of $M_2^\Gamma$, and let
$\sigma\in\Pi^{z_2}(M_2^\Gamma)^{\mic}_{\psi_2}$
be carried by the $\delta_2$-summand. Put
$\sigma^N=\sigma\otimes\chi^{2N}$. Then
$T(\widetilde R_N\sigma^N)=\widetilde R_0\sigma$ in
$K\Pi^z(G^\Gamma)$.
\end{lemma}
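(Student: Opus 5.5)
We plan to prove the identity one strong real form at a time, by building a single $L_N$-coherent family on $G$ that takes the value $\widetilde R_N\sigma^N$ at $\lambda_g$ and the value $\widetilde R_0\sigma$ at $\lambda_f$. Uniqueness in \cite[Proposition~2.10.2]{AIMV26} then identifies this family with $\Theta_{M,\gamma}$ for $M=\widetilde R_N\sigma^N$.

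\textbf{Step 1: reduction to a fixed chain.} Since $\sigma$ is carried by the $\delta_2$-summand, both sides of the identity are supported on the summand of the image strong real form $\delta_G$. If the centrality conditions of Lemma~\ref{lem:fixed-pair} fail, then \cite[(2.6.1)]{AIMV26} makes both sides zero, and there is nothing to prove. Otherwise Lemma~\ref{lem:fixed-pair} gives a compatible chain $K_{2,\delta_2}\subset K_{1,\delta_1}\subset K_{\delta_G}$. On this chain $R$ is cohomological induction $R_{\fq_2}^{\fm_1}$ followed by the real parabolic operator \eqref{eq:real-induction-normalization}. By \eqref{eq:summandwise-continuation} and Lemma~\ref{lem:conjugacy-naturality}, it suffices to compute $T_{L_N,\delta_G}$ on this chain. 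The Kottwitz signs $e(G^\Gamma)$ and $e(M_2^\Gamma)$ are constant on the summands involved. They are unchanged by the central twist, so they factor out, and it is enough to prove $T(R_N\sigma^N)=R_0\sigma$.

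\textbf{Step 2: the coherent family on $M_2$.} The characters $\chi^{2k}$, $k\in\ZZ$, are one-dimensional characters of $M_2$. Hence $\nu\mapsto\sigma\otimes\chi^{2k}$, for $\nu=\lambda_{2,f}+k\kappa$, is the restriction of a coherent family on $M_2$; it can be extended by tensoring with finite-dimensional modules on the lattice $\lambda_{2,f}+X^*(H)$. Both cohomological induction, in its Euler-characteristic form, and real parabolic induction carry coherent families on the Levi to coherent families on the larger group. The standard references are Vogan's Green's book and Knapp--Vogan for the first, and the character formula for induced representations for the second. The fixed phase character $\rho_{\fu_1}/|\rho_{\fu_1}|$ has zero infinitesimal character, so it does not disturb coherence. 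Applying these facts gives a coherent family $\Theta$ on $G$ with
\[
 \Theta(\lambda_f+k\kappa)=R(\sigma\otimes\chi^{2k}).
\]
The shift $\rho(\fu)$ built into the normalization of the inductions is the same at every point of the family. Using \eqref{eq:translation}, this places $R\sigma$ at $\lambda_f$ and $R\sigma^N$ at $\lambda_g=\lambda_f+N\kappa$.

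\textbf{Step 3: passage to $L_N$-families.} Restricting $\Theta$ to the coset $\lambda_g+X^*(L_N)$ gives an $L_N$-coherent family in the sense of \cite[Definition~2.10.3]{AIMV26}. This coset contains $\lambda_f$ by Lemma~\ref{lem:continuation-Levi}(2). By Lemma~\ref{lem:continuation-Levi}(1), $\lambda_g$ is in good range, so this restricted family is the unique family $\Theta_{M,\delta_G}$ with $M=R_N\sigma^N$. Evaluating it at $\lambda_f$ gives $T(R_N\sigma^N)=R_0\sigma$. Multiplying by the constant Kottwitz signs then gives the lemma.

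The main obstacle is Step 2: proving that the two-step extended-group operator, on genuine projective modules of type $z_2$ and then $z_1$, sends the twisted family $\sigma\otimes\chi^{2k}$ to a coherent family on $G$. This must hold beyond the good range, where cohomological induction may have cancellation in several degrees. The approach is to use the Euler characteristic, which has a character formula, together with exactness of real parabolic induction. The remaining point to check is that tensoring with the finite-dimensional $G$-module $F$ commutes with both inductions up to a filtration by $L$-modules induced from the weights of $F$. Concretely, the projection formula gives $R(\sigma)\otimes F=R(\sigma\otimes F|_{M_2})$ in the Grothendieck group, and one would then filter $F|_{M_2}$ by $M_2$-irreducibles to obtain the coherent-family identities.
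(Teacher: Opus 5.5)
Your proposal is correct and follows essentially the paper's route. On the compatible chain of Lemma~\ref{lem:fixed-pair} you build a classical coherent family on $G$ through the two-step induced classes, restrict it to $\lambda_g+X^*(L_N)\ni\lambda_f$, and apply good-range uniqueness, handling the constant Kottwitz signs and the conjugacy transport as the paper does. The paper differs in two ways: it prescribes the source data only on $\lambda_{2,g}+X^*(M_2)$, and it cites \cite[Propositions~2.10.4 and~2.8.4]{AIMV26} for exactly the coherence you flag as the main obstacle. This avoids your loose claim that a full $M_2$-family through $\sigma$, at a possibly singular infinitesimal character, can be obtained ``by tensoring with finite-dimensional modules''; such a family does exist, but that comes from the standard-module basis, not from tensoring.
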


\begin{proof}
Choose an irreducible module $V$ representing $\sigma^N$ on the
$\delta_2$-summand, and write $\sigma^N=(\delta_2,V)$. Tensoring by an
algebraic character does not change the strong real form. Let $\delta_1$
and $\delta_G$ denote the
images of $\delta_2$ in $M_1^\Gamma$ and $G^\Gamma$ whenever those images
are again strong real forms.

For $\mu\in X^*(M_2)$, let $\CC_\mu$ denote the corresponding algebraic
character of $M_2$. Its pullback to the canonical cover is trivial on the
covering kernel, so tensoring by $\CC_\mu$ preserves projective type
$z_2$. We shall extend the resulting family from
$\lambda_{2,g}+X^*(M_2)$ to $\lambda_{2,g}+X^*(H)$ as an abstract
coherent family.

The inverse character $\chi^{-2N}$ determines
$\mu_0=-2N\mu_\chi=-N\kappa\in X^*(M_2)\cap X^*(L_N)$, with
$\CC_{\mu_0}=\chi^{-2N}$.
Regard $X^*(M_2)$ as a sublattice of the common ambient weight lattice
$X^*(H)$. The first induction family is prescribed on this sublattice and
will then be extended to a classical coherent family on
$\lambda_{2,g}+X^*(H)$.
Then
\begin{equation}\label{eq:source-target-shift}
 \lambda_{2,g}+\mu_0=\lambda_{2,f},
 \qquad
 \lambda_g+\mu_0=\lambda_f,
 \qquad
 \sigma^N\otimes\CC_{\mu_0}\simeq\sigma.
\end{equation}
The first two identities follow from \eqref{eq:translation}, and the third
from $\sigma^N=\sigma\otimes\chi^{2N}$. Thus both the induction family and
$T$ may be evaluated at $\mu_0$.

The induction on extended groups is defined separately on each 
summand of strong real form. A summand is sent to zero unless its strong
real form remains strong in the larger extended group. Otherwise the two
stages are the cohomological and real-parabolic operators in the
conventions above. The behavior of coherent families is given by
\cite[Proposition~2.10.4 and Proposition~2.8.4]{AIMV26}.
We work throughout in Grothendieck groups. For $\mu\in X^*(M_2)$, put
$v_\mu=[V\otimes\CC_\mu]$ and, when $\delta_2^2\in Z(M_1)$, put
$w_\mu=R_{\fq_2}^{\fm_1}v_\mu$. Thus $w_\mu$ is a Grothendieck class. The first stage sends
$(\delta_2,v_\mu)$ to $(\delta_1,w_\mu)$ when
$\delta_2^2\in Z(M_1)$ and to zero otherwise. When it is nonzero, the
second stage sends $(\delta_1,w_\mu)$ to
$(\delta_G,R_{\fq_1}^{\fg}w_\mu)$ when $\delta_1^2\in Z(G)$ and to zero
otherwise.
The choice of branch depends only on $\delta_2$, not on $\mu$. Since
$\widetilde R_N\sigma^N=\alpha_{\sigma,N}\tau_g\neq0$ at $\mu=0$, both
centrality conditions hold. The first branch therefore applies for every
$\mu$, although an individual induced class may vanish. We work in the
compatible module categories of Lemma~\ref{lem:fixed-pair}; all induced
classes lie in the indicated
$(\mathfrak m_i,K_{i,\delta_i})$- and
$(\mathfrak g,K_{\delta_G})$-module Grothendieck groups. If the
representative chosen in \eqref{eq:strong-real-form-sum} is $\gamma$, we
transport the resulting target coherent family at the end using
\eqref{eq:conjugacy-naturality}.

For $\mu\in X^*(M_2)$, put
$\Theta_1(\lambda_{2,g}+\mu)=w_\mu=R_{\fq_2}^{\fm_1}v_\mu$ in
$K\Pi^{z_1}(\mathfrak m_1,K_{1,\delta_1})$.
The projective types at the first stage are $z_2\to z_1$ by
\eqref{eq:projective-type-ledger}. By \cite[Proposition~2.10.4]{AIMV26},
$\Theta_1$ is the restriction to the $M_2$-character lattice of
a classical coherent family for $(\mathfrak m_1,K_{1,\delta_1})$. Choose
such an extension $\Theta_1^{\mathrm{cl}}$ of projective type $z_1$ on
$\lambda_{2,g}+X^*(H)$. By \cite[Proposition~2.8.4]{AIMV26}, pointwise
application of $R_{\fq_1}^{\fg}$ gives a classical coherent family for
$(\mathfrak g,K_{\delta_G})$ of projective type $z$. The type change is
$z_1\to z$ by \eqref{eq:projective-type-ledger}, and the infinitesimal
character $\lambda_{2,g}+\mu$ maps to $\lambda_g+\mu$. Denote this family by
the formula
$\Theta_2^{\mathrm{cl}}(\lambda_g+\mu)=R_{\fq_1}^{\fg}
\Theta_1^{\mathrm{cl}}(\lambda_{2,g}+\mu)$ for $\mu\in X^*(H)$.
On the $M_2$-character lattice its values are
$\Theta_2(\lambda_g+\mu)=R_{\fq_1}^{\fg}
\Theta_1(\lambda_{2,g}+\mu)$ in
$K\Pi^z(\mathfrak g,K_{\delta_G})$.
Its values on $X^*(M_2)$ do not depend on the chosen classical extension.

It remains to incorporate Kottwitz signs. On these fixed summands the
source and target Kottwitz operators act by
the constants $\epsilon_2=e(M_2(\RR,\delta_2))$ and
$\epsilon_G=e(G(\RR,\delta_G))$, both in $\{\pm1\}$. Therefore
$\widetilde R(\sigma^N\otimes\CC_\mu)=
\epsilon_G\epsilon_2\Theta_2(\lambda_g+\mu)$ for every
$\mu\in X^*(M_2)$.
For the member $\sigma^N$ in \eqref{eq:good-range}, this gives
$\alpha_{\sigma,N}=\epsilon_G\epsilon_2\beta_{\sigma,N}$. Multiplication by
$\epsilon_G\epsilon_2$ preserves the coherent-family identities, so
$\Theta^{\mathrm{cl}}:=\epsilon_G\epsilon_2\Theta_2^{\mathrm{cl}}$
is a classical coherent family for $(\mathfrak g,K_{\delta_G})$ on
$\lambda_g+X^*(H)$. For every $\mu\in X^*(M_2)$ it
satisfies
\begin{equation}\label{eq:classical-extension-values}
 \Theta^{\mathrm{cl}}(\lambda_g+\mu)
 =
 \widetilde R(\sigma^N\otimes\CC_\mu).
\end{equation}
Restricting this classical family to the auxiliary character lattice
gives, by \cite[Definition~2.10.1]{AIMV26}, the $L_N$-coherent family
$\Theta^{L_N}=\Theta^{\mathrm{cl}}|_{\lambda_g+X^*(L_N)}$ in
$K\Pi^z(\mathfrak g,K_{\delta_G})$.
It is viewed through \eqref{eq:strong-real-form-sum} as the fixed
$\delta_G$-summand of $K\Pi^z(G^\Gamma)$.

Since $0,\mu_0\in X^*(L_N)$ and
$\mu_0\in X^*(M_2)$, equations
\eqref{eq:source-target-shift} and
\eqref{eq:classical-extension-values} give
$\Theta^{L_N}(\lambda_g)=\widetilde R_N\sigma^N$ and
$\Theta^{L_N}(\lambda_f)=\widetilde R_0\sigma$.
By Lemma~\ref{lem:continuation-Levi}(1), the parabolic subalgebra
$\fq_N=\mathfrak l_N\oplus\fu_N$ has Levi factor $\mathfrak l_N$, and
$\lambda_g$ is in the good range for $\fq_N$. Thus $\lambda_g$ is in the
good range for $\mathfrak l_N$ in the sense used in
\cite[Definition~2.5.3]{AIMV26}. By \cite[Proposition~2.10.2]{AIMV26},
applied in the $(\mathfrak g,K_{\delta_G})$-module category, an
$L_N$-coherent family is uniquely determined by its value at $\lambda_g$.
Thus $\Theta^{L_N}$ is independent of the chosen classical extension, and
\cite[Definition~2.10.3]{AIMV26} gives
$T_{L_N,\delta_G}(\lambda_g\to\lambda_f)
\Theta^{L_N}(\lambda_g)=\Theta^{L_N}(\lambda_f)$. Hence
$T_{L_N,\delta_G}(\lambda_g\to\lambda_f)\widetilde R_N\sigma^N
=\widetilde R_0\sigma$.
If the representative in \eqref{eq:strong-real-form-sum} is $\gamma$,
choose $g$ as above and apply $C_g$. Conjugacy naturality
\eqref{eq:conjugacy-naturality} identifies the transported continuation
with the $\gamma$-summand, and \cite[Definition~2.4.2]{AIMV26} identifies
the transported endpoint classes with the same elements of
$K\Pi^z(G^\Gamma)$. Thus \eqref{eq:summandwise-continuation} gives the
asserted identity in $K\Pi^z(G^\Gamma)$.
\end{proof}

\section{Proof of the support equality}

\begin{theorem}\label{thm:reverse}
One has
\begin{equation}\label{eq:reverse}
 \left[
 R_{\fq_1}^{\fg}R_{\fq_2}^{\fm_1}
 \Pi^{zz(\rho_{\fu_1})z(\rho_{\fu_2})}
      (M_2^\Gamma)^{\mic}_{\psi_2}
 \right]
 \subseteq
 \Pi^z(G^\Gamma)^{\mic}_{\psi}.
\end{equation}
\end{theorem}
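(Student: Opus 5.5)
Fix an irreducible $\pi$ in the left-hand side of \eqref{eq:reverse}. By definition of $[\,\cdot\,]$, there is some $\sigma\in\Pi^{z_2}(M_2^\Gamma)^{\mic}_{\psi_2}$ such that $\pi$ has nonzero coefficient in $R\sigma=R_0\sigma$. Since the Kottwitz operators $e(G^\Gamma)$ and $e(M_2^\Gamma)$ act diagonally by signs, $R_0\sigma$ and $\widetilde R_0\sigma$ have the same support. It therefore suffices to prove
\[
 [\widetilde R_0\sigma]\subseteq\Pi^z(G^\Gamma)^{\mic}_{\psi}
\]
for each such $\sigma$. The whole proof is assembled from earlier results, and no new geometric input is required.

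We begin by passing to the good-range translate. Write $\sigma=\sum_{[\delta_2]}\sigma_{\delta_2}$ along the strong-real-form decomposition; since $\sigma$ is irreducible, it is carried by a single summand $\delta_2$. Put $\sigma^N=\sigma\otimes\chi^{2N}$. By \eqref{eq:packet-twist}, $\sigma^N\in\Pi^{z_2}(M_2^\Gamma)^{\mic}_{\psi_2^N}$. The good-range bijection \eqref{eq:good-range} then gives $\widetilde R_N\sigma^N=\alpha_{\sigma,N}\tau_g$ for a unique $\tau_g\in\Pi^z(G^\Gamma)^{\mic}_{\psi^N}$ and a sign $\alpha_{\sigma,N}$. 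In particular $\widetilde R_N\sigma^N\neq0$, which is exactly the nonvanishing used in Lemma~\ref{lem:coherent-family} to select the nonzero branch of extended induction.

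Next we continue back to the original infinitesimal character. Lemma~\ref{lem:coherent-family} yields
\[
 \widetilde R_0\sigma=T(\widetilde R_N\sigma^N)=\alpha_{\sigma,N}\,T\tau_g
\]
in $K\Pi^z(G^\Gamma)$. Hence $[\widetilde R_0\sigma]=[T\tau_g]$. Proposition~\ref{prop:continuation} gives $[T\tau_g]\subseteq\Pi^z(G^\Gamma)^{\mic}_{\psi}$, which is the required inclusion. Taking the union over $\sigma$ proves \eqref{eq:reverse}. Combined with the inclusion \eqref{eq:intro-known} of AIMV, this also gives Theorem~\ref{thm:main}.

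The main point needing care is bookkeeping rather than new mathematics. First, the identity of Lemma~\ref{lem:coherent-family} must hold as an identity of Grothendieck classes, not merely of supports. This is what prevents cancellation between different source members, because each $\sigma$ is treated individually. Second, $\sigma^N$ must genuinely lie in the translated packet, so that $\tau_g$ is visible at $\nu_g$. Third, the projective types $z_2=zz(\rho_{\fu_1})z(\rho_{\fu_2})$ in \eqref{eq:reverse} must match those in Lemma~\ref{lem:coherent-family}, which is recorded in \eqref{eq:projective-type-ledger}.
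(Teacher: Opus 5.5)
Your proposal is correct and follows the paper's proof essentially step for step. You twist $\sigma$ to $\sigma^N$, use the good-range bijection \eqref{eq:good-range} to produce $\tau_g$, apply Lemma~\ref{lem:coherent-family} to get $\widetilde R_0\sigma=\alpha_{\sigma,N}T\tau_g$, invoke Proposition~\ref{prop:continuation}, and pass between $[R_0\sigma]$ and $[\widetilde R_0\sigma]$ using that $\sigma$ lies in a single strong-real-form summand and that $e(G^\Gamma)$ is diagonal with signs; the only difference is that you do this Kottwitz-sign reduction at the start rather than at the end.
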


\begin{proof}
Fix $\sigma\in\Pi^{z_2}(M_2^\Gamma)^{\mic}_{\psi_2}$.
By \eqref{eq:packet-twist},
$\sigma^N\in\Pi^{z_2}(M_2^\Gamma)^{\mic}_{\psi_2^N}$. The 
good-range statement \eqref{eq:good-range} gives a unique
$\tau_g\in\Pi^z(G^\Gamma)^{\mic}_{\psi^N}$ and a sign
$\alpha_{\sigma,N}$ such that
$\widetilde R_N\sigma^N=\alpha_{\sigma,N}\tau_g$. Applying $T$ and
Lemma~\ref{lem:coherent-family} gives
$\widetilde R_0\sigma=T(\widetilde R_N\sigma^N)
=\alpha_{\sigma,N}T\tau_g$.
By Proposition~\ref{prop:continuation},
$[T\tau_g]\subseteq\Pi^z(G^\Gamma)^{\mic}_{\psi}$. Hence
$[\widetilde R_0\sigma]\subseteq\Pi^z(G^\Gamma)^{\mic}_{\psi}$.
Since $\sigma$ is irreducible, it belongs to a single strong-real-form
summand. Write $e(M_2^\Gamma)\sigma=\epsilon_2\sigma$, where
$\epsilon_2\in\{\pm1\}$. From
$\widetilde R_0=e(G^\Gamma)R_0e(M_2^\Gamma)$ we obtain
$R_0\sigma=\epsilon_2e(G^\Gamma)\widetilde R_0\sigma$.
The operator $e(G^\Gamma)$ is diagonal with entries $\pm1$ in the
irreducible basis, and hence
$[R_0\sigma]=[\widetilde R_0\sigma]$. By definition, $R_0\sigma$ is
$R_{\fq_1}^{\fg}R_{\fq_2}^{\fm_1}\sigma$. Taking the union over all
source-packet members proves \eqref{eq:reverse}.
\end{proof}

The opposite inclusion is \eqref{eq:intro-known}, proved in
\cite[Theorem~4.6.1(iii)]{AIMV26}.

\begin{proof}[Proof of Theorem~\ref{thm:main}]
Combine Theorem~\ref{thm:reverse} with \eqref{eq:intro-known}.
\end{proof}

\enlargethispage{2\baselineskip}

\end{document}